\documentclass[preprint,12pt]{elsarticle}
\usepackage[english]{babel}
\usepackage{listings}
\usepackage{float}
\usepackage{epsf}
\usepackage{graphicx} 
\usepackage[nottoc, notlof, notlot]{tocbibind} 
\usepackage{subfigure}
\usepackage{multirow}
\usepackage{tikz}
\usepackage{algorithm}
\usepackage{algorithmic}



 \usepackage{graphics}
\usepackage{amsmath,amsfonts,amssymb} 
\newtheorem{theorem}{Theorem}[section]

\newtheorem{remark}{Remark}
\newenvironment{proof}[1][Proof]{\begin{trivlist}
\item[\hskip \labelsep {\bfseries #1}]}{\end{trivlist}}
\newcommand{\argmin}{\operatornamewithlimits{argmin}}
 \usepackage{lineno}
\graphicspath{{./}}




\journal{Journal of Computational Physics}

\begin{document}

\begin{frontmatter}



\title{Numerical simulation of a class of models that combine several mechanisms of dissipation: fracture, plasticity, viscous dissipation.}

\author[label1]{Eric Bonnetier}
\author[label1]{Luk\'a\v{s} Jakab\v{c}in}
\author[label1]{St\'ephane Labb\'e}
\author[label2]{Anne Replumaz}
\address[label1]{Laboratoire Jean Kuntzmann, Universit\'e de Grenoble-Alpes, CNRS, Grenoble, France}
\address[label2]{ISTerre, Universit\'e de Grenoble-Alpes, CNRS, Grenoble, France}

\author{}

\address{}
\begin{abstract}
We study a class of time evolution models that contain  dissipation mechanisms exhibited by geophysical materials during deformation: 
plasticity, viscous dissipation and fracture.
We formally prove that they satisfy a Clausius-Duhem  type inequality.
We describe a semi-discrete time evolution associated with these models,
and report numerical 1D and 2D traction experiments, that illustrate that several dissipation regimes
can indeed take place during the deformation. 
Finally, we report 2D numerical simulation of an experiment by Peltzer and Tapponnier, 
who studied the indentation of a layer of plasticine as an analogue model for geological materials.
\end{abstract}


\begin{keyword}
quasistatic evolution \sep fracture  \sep plasticity 


\end{keyword}

\end{frontmatter}



\section{Introduction}

In this paper, we study a class of models that combine several mechanisms of dissipation: 
plasticity, visco-plasticity, visco-elasticity and fracture. 

Our goal is to investigate whether models from solid mechanics could be pertinent to describe geophysical materials
(and particularly the lithosphere on continental scales), as advocated by Peltzer and Tapponnier \cite{PeltzerTapponnier}, while others (see for example~\cite{England1}, \cite{England2})
prefer descriptions based on fluid mechanics.
The solid mechanics approach would have advantage to account for cracks in the formation of geological faults. They illustrate their claim  with analogue experiments, where a rigid indenter deforms a layer of plasticine, to model the action of the Indian sub-continent on
the Tibetan plateau.
The plasticine experiments seems to reproduce the geophysical scenario of creation of the Asian faults and the extrusion of the South-Asian block (including Vietnam).

No general consensus prevails on the modeling of crack initiation and propagation, even in homogeneous materials. 
The popular Griffith model, much in use in the engineering community, suffers from various shortcomings. 
For instance it does not account for crack nucleation, and assumes pre-determined crack paths. 
In the last decade, a series of investigation initiated by Francfort and Marigo \cite{FrancfortMarigo} 
has addressed the mathematical foundation of fracture mechanics, using new concepts that have emerged 
from the mathematical modeling of composite materials and from the calculus of variation. 
This approach postulates that crack evolution is governed by the minimization of a total energy, among all 
possible crack states. 

Our models of fracture are inspired by this work, though we only consider fracture via a phase-field approximation.
In other words, the geometry of possible cracks is captured by a function $v$ with values between $0$ and $1$,
$v=1$ in the healthy parts that do not contain cracks. The length of the cracks, a quantity that contributes to the
total energy, is approximated via a functionnal introduced by Ambrosio and Tortorelli~\cite{Ambrosio} and Bourdin~\cite{Bourdin}.
The numerical simulations of fracture in a purely elastic medium, using such phase-field approximation,
was carried out by Bourdin, Francfort and Marigo~\cite{Bourdin2}, \cite{BourdinFrancfortMarigo1}, \cite{BourdinFrancfortMarigo}.
A model combining elasticity, visco-elasticity and fracture regularized via phase-field, is analyzed in~\cite{Larsen},
where the main point is how to define a consistent evolution  as the limit of semi-discrete approximations
in time.
Our class of models extends this work to the case when plastic behavior and viscoplastic behavior can occur.

From a thermodynamical point of view, we interpret the phase field function $v$, that tracks the location and 
propagation of cracks, not only as a variable for numerical approximation,
but as a global thermodynamical internal variable. 
We show that our models are consistent with thermodynamics, in the 
sense that they satisfy a Clausius-Duhem type inequality. 
We propose a numerical scheme for a space-time discretization of the evolution,
and analyze its advantages and shortcomings on 1D et 2D traction experiments and on the experiment by Peltzer and Tapponnier~\cite{PeltzerTapponnier}.

The paper is organized as follows. In Section 2, we describe the proposed models 
with regularized fracture and define their evolution in time.
Section 3 is dedicated to showing that they satisfy a Clausius-Duhem type inequality.
In Section 4, we introduce a semi-discrete time evolution, which is the base, in the final section, for numerical experiments
in the case of 1D, 2D traction and 2D plasticine experiment.
In particular we show that several dissipation mechanisms can be expressed according to the choice of parameters.

\section{Description of models with several dissipation mechanisms.}


\subsection{Notations.}

Throughout the paper, $\Omega$ denotes a bounded connected open set in $\mathbb{R}^{2}$ with Lipschitz boundary 
$\partial\Omega=\partial\Omega_D\cup\partial\Omega_N$, where $\partial\Omega_D,\, \partial\Omega_N$ are 
disjoint measurable sets. We denote time derivatives with a dot and $\argmin_{v\in V}\mathcal{F}(v)$ denotes a function $u$ that minimizes
$\mathcal{F}$ over $V$.

Given $T_f>0$, we denote by $L^p((0,T_f),X)$, $W^{k,p}((0,T_f),X)$, the Lebesgue and Sobolev spaces involving time [see \cite{Evans} p. 285], 
where X is a Banach space. The set of symmetric $2 \times 2$ matrices is denoted by $\mathbb{M}_{\text{sym}}^{2\times2}$ .
 For $\xi,\zeta \in \mathbb{M}_{\text{sym}}^{2\times2}$ we define the scalar product between matrices
$\zeta:\xi:=\sum_{ij}\zeta_{ij}\xi_{ij}$, and the associated matrix norm by $|\xi|:=\sqrt{\xi:\xi}$.
Let A be the fourth order tensor of Lam\'e coefficients and B a suitable symmetric-fourth order tensor. 
We assume that for some constants  $0<\alpha_1 \leq\ \alpha_2<\infty$, they satisfy the ellipticity conditions
\begin{eqnarray*}
\forall\; e \in \mathbb{M}_{\text{sym}}^{2\times2},\quad \alpha_1|e|^{2}\leq Ae:e\leq \alpha_2|e|^{2}\quad\text{and}\quad\alpha_1|e|^{2}\leq Be:e\leq \alpha_2|e|^{2}
\end{eqnarray*}
The mechanical unknowns of our model are the displacement field $ u: \Omega \times[0,T_f]\rightarrow \mathbb{R}^2$,
the elastic strain $e: \Omega \times[0,T_f]\rightarrow \mathbb{M}_{\text{sym}}^{2\times2}$, the plastic strain $p: \Omega \times[0,T_f]\rightarrow \mathbb{M}_{\text{sym}}^{2\times2}$. 
We assume $u$ and $\nabla u$ remain small. So that the relation between the deformation tensor $E$ and the displacement field is given by
\begin{eqnarray*}
Eu := \frac{1}{2}(\nabla u + \nabla u^T).
\end{eqnarray*}
We also assume that $Eu$ decomposes as an elastic part and
a plastic part
\begin{eqnarray*}
Eu &=& e + p.
\end{eqnarray*}
For $w \in H^1(0,T_f, H^1(\Omega, \mathbb{R}^2))$, which represents an applied boundary displacement, we define for $t\in[0,T_f]$ the set of kinematically admissible fields by
\begin{eqnarray*}
A_{adm}(w(t)) &:=& \{(u,e,p) \in H^{1}(\Omega,\mathbb{R}^{2}) \times L^{2}(\Omega,\mathbb{M}_{\text{sym}}^{2\times2})\times L^{2}(\Omega,\mathbb{M}_{\text{sym}}^{2\times2})~:  \\
&& Eu= e+p\, \quad a.e.\,\, \text{in} \,\, \Omega,\;u=w(t) \quad a.e.\;\; \text{on}  \; \; \partial \Omega_D\}.
\end{eqnarray*}
For $f \in C^1([0,T_f], L^2(\Omega)^2)$, and $g \in C^1([0,T_f],H^{-1/2}(\Omega)^2)$,
the external forces at time $t \in [0,T_f]$ are collected into
$$
\langle l(t), u \rangle	:= \int_\Omega f(t).u\,dx+\int_{\partial\Omega_N} g(t).u\,ds.
$$
For a fixed constant $\tau>0$, we define $\mathbb{K}:=\{q\in\mathbb{M}_{\text{sym}}^{2\times2};\, |q|\leq \tau\quad a.e.\, \text{in} \, \Omega\}$. 
We define $H:\mathbb{M}_{\text{sym}}^{2\times2}\rightarrow[0,\infty]$ the support function  of $\mathbb{K}$ by
$$
H(p):=\sup_{\theta\in \mathbb{K}}\,\, \theta:p=\tau|p|,
$$
and a perturbed dissipation potential $H_{\beta}$ by
$$
H_{\beta}(p):=H(p)+\dfrac{\beta}{2}|p|^2,
$$
where $\beta>0$ plays the role of a regularization parameter.
The variational approach to fracture \cite{FrancfortMarigo}, \cite{BourdinFrancfortMarigo} 
is based on Griffith's idea that the crack growth and crack path are determined by the competition between
the elastic energy release, when the crack increases, and the energy
dissipated to create a new crack. 
We approximate the fracture (see Figure~\ref{rupture}) by a phase field function 
$v: \Omega\times[0,T_f]\rightarrow [0,1]$ that depends on two parameters:
\begin{itemize}
\item $\epsilon>0$, the parameter of space regularization, relates to the width of the generalized fracture,
\item $\eta>0$ is a parameter, that preserves the ellipticity of the elastic energy. In \cite{Ambrosio}, $\eta$ scales as $o(\varepsilon)$ as $\varepsilon\rightarrow0$
in the approximation of a true crack by a phase-field function.
\end{itemize}
The nucleation and propagation of cracks, and the material deformation 
result from minimizing at each time a global energy, that contains several terms:
\begin{eqnarray*}
 \mathcal{E}_{total}:= \mathcal{E}_{el}+ \mathcal{E}_{p}+ \mathcal{E}_{h}+ \mathcal{E}_{ve}+ \mathcal{E}_{vp}+ \mathcal{E}_{S}.
\end{eqnarray*}
The elastic energy is defined as
\begin{align*}
&\mathcal{E}_{el}\,:\,  L^{2}(\Omega,\mathbb{M}_{\text{sym}}^{2\times2}) \times H^1(\Omega,\mathbb{R})\rightarrow \mathbb{R}\\
&(e,v)\longmapsto \mathcal{E}_{el}(e,v) = \dfrac{1}{2} \int_{\Omega}\left( v^{2}+\eta\right)Ae:e \,dx. 
\end{align*}
The plastic dissipated energy is defined, by
\begin{align*}
&\mathcal{E}_{p}\,:\,  L^{2}(\Omega,\mathbb{M}_{\text{sym}}^{2\times2})\times L^{2}(\Omega,\mathbb{M}_{\text{sym}}^{2\times2})\rightarrow \mathbb{R}\\
&(p,p_0)\longmapsto \mathcal{E}_{p}(p,p_0) = \int_{\Omega}H(p-p_0)\,dx,
\end{align*}
and the hardening energy by
\begin{align*}
&\mathcal{E}_{h}\,:\,  L^{2}(\Omega,\mathbb{M}_{\text{sym}}^{2\times2})\rightarrow \mathbb{R}\\
&p\longmapsto \mathcal{E}_{h}(p) = \dfrac{1}{2}\int_{\Omega}Bp:p\,dx.
\end{align*}
Given $\beta_1>0$, $\beta_2>0$ and $h>0$, the visco-elastic energy is 
\begin{align*}
&\mathcal{E}_{ve}\,:\, H^{1}(\Omega,\mathbb{R}^2)\times H^{1}(\Omega,\mathbb{R}^2)\rightarrow \mathbb{R}\\
&(u,u_0)\longmapsto \mathcal{E}_{ve}(u,u_0) = \dfrac{\beta_1}{2h}\int_{\Omega}(E(u)-E(u_0)):(E(u)-E(u_0))\,dx.
\end{align*}
and the viscoplastic energy is defined by
\begin{align*}
&\mathcal{E}_{vp}\,:\,  L^{2}(\Omega,\mathbb{M}_{\text{sym}}^{2\times2})\times L^{2}(\Omega,\mathbb{M}_{\text{sym}}^{2\times2})\rightarrow \mathbb{R}\\
&(p,p_0)\longmapsto \mathcal{E}_{vp}(p,p_0) = \dfrac{\beta_2}{2h}\int_{\Omega}(p-p_0):(p-p_0)\,dx.
\end{align*}
The Griffith surface energy is approximated by the phase-field surface energy  
\begin{align*}
&\mathcal{E}_{S} \,:\,H^{1}(\Omega,\mathbb{R})\rightarrow \mathbb{R}\\
&v\quad\longmapsto \,\mathcal{E}_{S}(v)= \int_{\Omega} \varepsilon \vert\nabla v\vert^{2} dx+ \int_{\Omega}\frac{\left(1-v\right)^{2}}{4\varepsilon} \,dx.
\end{align*}
It is shown in \cite{Bourdin} that in the elastic anti-plane case, where the displacement reduces to a scalar and $Eu$  reduces to $\nabla u$, 
the Ambrosio-Tortorelli functional
$$
\mathcal{E}_{\varepsilon}(\nabla u,v) =\mathcal{E}_{el}(\nabla u,v)+\mathcal{E}_{S}(v),
$$
$\Gamma$-converges, as $0<\eta\ll\epsilon\rightarrow0$, to the Griffith energy $\mathcal{G}$, where
\begin{eqnarray*}
\mathcal{G}(u):=\dfrac{1}{2}\int_{\Omega}A |\nabla u|^2\,dx+\mathcal{H}^{N-1}(S(u)).
\end{eqnarray*}
Here, $S(u)$ denotes the discontinuity set of u, and $\mathcal{H}^{N-1}$ is the $(N-1)$- dimensional Hausdorff measure.

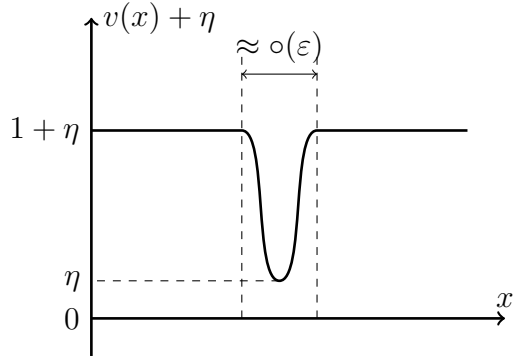
\begin{figure}[h]
\begin{center}	
	\begin{tikzpicture}[scale=1] 
		\def\VDepth{2}; 
		\def\VDefaultLevel{2.5}; 
		\def\VEpsilon{0.5}; 
		\def\VPower{0.35}; 
		\def\VBorder{2}; 
		
		\draw[line width=1pt] ({-\VEpsilon - \VBorder}, \VDefaultLevel) node[left] {$1+\eta$} 
			-- ++(\VBorder, 0)
			.. controls +(0:\VPower) and +(180:\VPower) .. ++(\VEpsilon, -\VDepth)
			.. controls +(0:\VPower) and +(180:\VPower) .. ++(\VEpsilon, \VDepth)
			-- ++(\VBorder, 0);
		
		\draw [dashed] (0, {\VDefaultLevel - \VDepth}) -- ++({-\VEpsilon - \VBorder}, 0) node[left] {$\eta$};		
			
		\draw[dashed] (-\VEpsilon, 0) -- (-\VEpsilon, {\VDefaultLevel + 1});
		
		\draw[dashed] (\VEpsilon, 0) -- (\VEpsilon, {\VDefaultLevel + 1});
		
		
		\draw[<->] (-\VEpsilon, {\VDefaultLevel + 0.75}) -- ++({2*\VEpsilon}, 0) node[pos=0.5, above] {$\approx\circ(\varepsilon)$};
		
		
		\draw[->, line width=1pt] ({-\VEpsilon - \VBorder}, -0.5) -- ++(0, {\VDefaultLevel + 2}) node[right] {$v(x) + \eta$};
		\draw[->, line width=1pt] ({-\VEpsilon - \VBorder}, 0) node[left] {$0$} -- ++({2*\VBorder + 2*\VEpsilon + 0.5}, 0) node[above] {$x$};
		\end{tikzpicture}
\end{center}
\caption{In the generalized crack model, a crack is replaced by a thin region of very compliant material.}
\label{rupture}
\end{figure}
Note that in the region around an approximate crack, where $v$ is close to 0,
the effective Lam\'e tensor is $(v^2+\eta)A$ : the elastic material is replaced there by a 
very compliant medium. We define $a:=v^2+\eta$.
\medskip
\subsection{Formulation of the models.}
We now propose 3 models that combine the various ingredients that we are interested in. 
\begin{itemize}
 \item Model 1 contains: elasticity, plasticity, visco-elasticity and fracture,
 \item Model 2: elasticity, plasticity, visco-plasticity, fracture,
 \item Model 3: elasticity, plasticity, kinematic hardening, fracture.
\end{itemize}
We define a time evolution for our models to be a quadruplet of functions
$(u,e,p,v): \Omega\times[0,T_f]\rightarrow \mathbb{R}^{2} \times\mathbb{M}_{\text{sym}}^{2\times2}\times\mathbb{M}_{\text{sym}}^{2\times2} \times \mathbb{R}$
satisfying the following conditions:
\begin{itemize}
 \item[(E1)] Initial condition: $(u(0),v(0),e(0),p(0))=(u_0,v_0,e_0,p_0)$ with \\ 
 $(u_0,e_0,p_0) \in A_{adm}(w(0))$. We also suppose that $(v_0^2+\eta)|Ae_0|\leq\tau$ and
$v_0 =1$ in $\Omega$ (the medium at $t=0$ does not contain any crack). 
\item[(E2)] Kinematic compatibility: for $t\in[0,T_f]$,
\begin{eqnarray*}
(u(t),e(t),p(t)) \in A_{adm}(w(t)) \label{B} 
\end{eqnarray*}
\item [(E3)] Equilibrium condition: for $t\in[0,T_f]$, 
\begin{equation}\label{bebe}
\left\{
    \begin{array}{ll} 
        -\text{div}(\sigma(t))=f(t),&   a.e.\,\,\text{in}\quad \Omega,\\
         \sigma(t).\vec{n}=g(t), &   \text{on}\quad\partial\Omega_N,\notag\\
         (u(t),v(t))=(w(t),1), \quad &\text{on} \quad \partial\Omega_D. \notag
      \end{array}
\right.
\end{equation}
\item [(E4)] Constitutive relations: for  $t\in[0,T_f]$,
\begin{itemize}
 \item Model 1: $\sigma(t)= (v(t)^2 + \eta)Ae(t) + \beta_1 E\dot{u}(t)$.
The first term represents the stress due to elastic deformation, while the second 
represents viscous dissipation.
\item Model 2 and Model 3: $\sigma(t)= (v(t)^2 + \eta)Ae(t).$
There the stress is only related to elastic deformation. We recall the notation $a(t):=v(t)^2 + \eta$.
\end{itemize}
\item[(E5)] Plastic flow rule: for a.e. $t\in[0,T_f]$,
\begin{itemize}
 \item Model 1: 
 \begin{eqnarray}
a(t)Ae(t) \in \partial H(\dot{p}(t)) \quad \text{for}\,\,a.e.\quad x\in \Omega. \label{model1}
\end{eqnarray}
\item Model 2: 
\begin{eqnarray}
a(t)Ae(t)\in \partial H_{\beta_2}(\dot{p}(t)) \quad \text{for}\,\,a.e.\quad x\in \Omega. \label{model2} 
\end{eqnarray}
\item Model 3:
\begin{eqnarray}
a(t)Ae(t)-Bp(t) \in \partial H(\dot{p}(t)) \quad \text{for}\,\,a.e.\quad x\in \Omega. \label{model3}
\end{eqnarray}
\end{itemize}
\item[(E6)] Crack stability condition: for $t\in[0,T_f]$,
\begin{eqnarray*}
\mathcal{E}_{el}(e(t),v(t))+\mathcal{E}_{S}(v(t))= \inf_{v=1\,\text{sur} \,\partial\Omega_D,v\leq v(t)} \mathcal{E}_{el}(e(t),v)+\mathcal{E}_{S}(v).\label{ruptura}
\end{eqnarray*}
The crack stability condition implies that a fracture can only grow, and cannot disappear.
\item[(E7)] Energy balance formula: for every $T\in[0,T_f]$, the energy dissipates in the medium so as to satisfy
\begin{itemize}
\item Model 1:
\begin{eqnarray}
\lefteqn{\mathcal{E}_{el}(e(T),v(T))+\mathcal{E}_{S}(v(T))-\langle l(T), u(T)\rangle}\notag\\
&=&
\mathcal{E}_{el}(e(0),v(0))+\mathcal{E}_{S}(v(0))-\langle l(0), u(0)\rangle\notag \\
&-&
\beta_1\int_0^T \parallel E\dot{u}(t)\parallel_2^{2} dt - \tau \int_0^T  \int_\Omega |\dot{p}| dx\,dt\notag\\
&-&
\int_0^T\langle\dot{l},u\rangle\,dt+\int_0^T\int_{\partial\Omega_D}\sigma(t)\vec{n}.\dot{w}(t) \,ds\,dt.\label{energy1}
\end{eqnarray}
\item Model 2:
\begin{eqnarray}
\lefteqn{\mathcal{E}_{el}(e(T),v(T))+\mathcal{E}_{S}(v(T))-\langle l(T), u(T)\rangle}\notag\\
&=&
\mathcal{E}_{el}(e(0),v(0))+\mathcal{E}_{S}(v(0))-\langle l(0), u(0)\rangle\notag \\
&-&
\beta_2\int_0^T \parallel\dot{p}(t)\parallel_2^{2} dt - \tau \int_0^T  \int_\Omega |\dot{p}| dx\,dt\notag\\
&-&
\int_0^T\langle\dot{l},u\rangle\,dt+\int_0^T\int_{\partial\Omega_D}\sigma(t)\vec{n}.\dot{w}(t) \,ds\,dt.\label{energy2}
\end{eqnarray}
\item Model 3:
\begin{eqnarray}
\lefteqn{\mathcal{E}_{el}(e(T),v(T))+\mathcal{E}_{S}(v(T))+\mathcal{E}_{h}(p(t))-\langle l(T), u(T)\rangle}\notag\\
&=&
\mathcal{E}_{el}(e(0),v(0))+\mathcal{E}_{S}(v(0))+\mathcal{E}_{h}(p(0))-\langle l(0), u(0)\rangle\notag\\
&-&
\tau \int_0^T \int_\Omega |\dot{p}| dx\,dt\notag-\int_0^T\langle\dot{l},u\rangle\,dt+\int_0^T\int_{\partial\Omega_D}\sigma(t)\vec{n}.\dot{w}(t) \,ds\,dt.\notag\\
\label{energy3}
\end{eqnarray} 
\end{itemize}
\end{itemize}
In the rest of paper, we suppose $l\equiv0$.
\section{Consistency of models with thermodynamics.}
In this section, we show that our models can be set in a thermodynamical framework
which resembles that of the Generalized Standard Materials of Halphen and Nguyen~\cite{HalphenSon},
see also Le Tallec~\cite{LeTallec}.
To this end, we introduce for $t\in [0,T]$, a free energy density $w(E(t),v(t),p(t))$ 
which depends on $E(t):=Eu(t)$, and on $v(t), p(t)$, the latter considered as internal variables (see~~\cite{HalphenSon}).
We also introduce a free energy functional $\mathcal{W}(t)$
\begin{eqnarray*}
\mathcal{W}(E,p,v)(t)&=&\int_{\Omega} w(E,p,v)(t)\,dx,
\end{eqnarray*}
and thermodynamic forces as operators associated with the internal variables
\begin{eqnarray*}
T_p(t)\tilde{p}:=-\frac{\partial w}{\partial p}(E,p,v)(t)\tilde{p}\quad\text{and}\quad T_v(t)\tilde{v}:=-\frac{\partial \mathcal{W}}{\partial v}(E,p,v)(t)\tilde{v}.\label{thermo3}
\end{eqnarray*}
Note that in our case, the thermodynamic force associated with the phase field
is defined as a global operator $H^1(\Omega) \longrightarrow \mathbb{R}$.
We also postulate the existence of a dissipation potential $\phi(t)=\phi(\dot{E}(t),\dot{p}(t),\dot{v}(t))$
which is a convex function of its arguments and minimal at
$(\dot{E}(t),\dot{p}(t),\dot{v}(t))=(0,0,0)$.
According to the principle of conservation of linear momentum, 
we recall that the Cauchy theorem implies the existence of symmetric stress tensor $\sigma(t)$ which satisfies  under the hypothesis
of small deformations the equilibrium condition (E3).
The stress tensor is split into an irreversible and a reversible part by setting
\begin{eqnarray}
\sigma^{rev}(t):=\frac{\partial w}{\partial E}(E(t),p(t),v(t))\quad\text{and}\quad \sigma^{irrev}(t):=\sigma(t)-\sigma^{rev}(t).\label{baba}
\end{eqnarray}
Following the work of Halphen and Nguyen \cite{HalphenSon} and LeTallec \cite{LeTallec}, we make the constitutive hypothesis that the thermodynamic forces are
related to the dissipation potential by 
\begin{eqnarray}
(\sigma^{irrev}(t),T_p(t), T_v(t)) \in \partial \phi (\dot{E}(t), \dot{p}(t), \dot{v}(t)). \label{SE8}
\end{eqnarray}
Our goal is to show that our models are consistent with this thermodynamic 
framework, in the sense that if one assumes  (\ref{SE8}) and if the equilibrium condition (E3) is verified, then one recovers the relations (E4)-(E7),
and further, a form of the Clausius-Duhem inequality holds.
We state this result for Model 1 only. The same analysis carries on
for Models 2 and 3 (see the remark below).
We also define the fracture dissipation potential by
$$
D_{\text{S}} (\xi)= \left\{
    \begin{array}{ll}
       0 & \mbox{if} \quad\xi\leq 0, \,\, a.e.\quad\text{in} \,\Omega, \,\xi\in H^1_D(\Omega),\, \\
\infty & \mbox{elsewhere},
    \end{array}
\right.
$$
with  $H^1_D(\Omega):=\{z\in H^1(\Omega); z=0\,\, \text{on}\,\, \partial\Omega_D\}$.
\begin{theorem}\label{Theorem1}
Suppose that $(u,v,e,p)$ in $C^1(0,T_f, H^1(\Omega))\times C^1(0,T_f, H^1(\Omega))\times C^1(0,T_f, L^2(\Omega))\times C^1(0,T_f, L^2(\Omega))$ 
satisfy for all $t\in [0,T]$, $(\dot{u}(t),\dot{e}(t),\dot{p}(t))\in A_{adm}(\dot{w}(t))$,  $\dot{v}(t)\leq 0 \, a.e.\,\text{in} \,\,\Omega, \,\dot{v}(t)\in H^1_D(\Omega)$, $v(t)=1$ on $\partial\Omega_D$,
 (E1), (E2), (E3), and (\ref{SE8}). Let
\begin{eqnarray*}
\mathcal{W}_1(t,Eu(t),p(t),v(t))&:=&  \dfrac{1}{2} \int_{\Omega}\left( v(t)^{2}+\eta\right)A(Eu(t)-p(t)):(Eu(t)-p(t)) \,dx\\
                  &+& \int_{\Omega} \varepsilon \vert\nabla v(t)\vert^{2} dx+ \int_{\Omega}\frac{\left(1-v(t)\right)^{2}}{4\varepsilon} \,dx,
\end{eqnarray*}
and the potential of dissipation
\begin{eqnarray*}
\phi_1(t,E\dot{u}(t),\dot{p}(t),\dot{v}(t))= \dfrac{1}{2} \beta_1E\dot{u}(t):E\dot{u}(t)+ \tau |\dot{p}(t)|+D_{\text{S}} (\dot{v}(t)).
\end{eqnarray*}
Then $(u,v,e,p)$ satisfies (E4), (E5), (E6), (E7).  Furthermore, for all $t\in [0,T_f]$,
\begin{eqnarray}
\mathcal{D}(t):=\int_{\Omega}\sigma(t):E\dot{u}(t)\,dx-\dot{\mathcal{W}}_1(t)\geqslant 0.
\end{eqnarray}
\end{theorem}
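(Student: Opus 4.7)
The dissipation potential $\phi_1$ splits as a sum of three terms, each depending on only one of $E\dot u$, $\dot p$, and $\dot v$, so the inclusion~(\ref{SE8}) decouples into three independent subdifferential conditions. The plan is to exploit this decoupling to extract (E4), (E5), and (E6) one at a time, and then to combine them with the equilibrium~(E3) and an integration by parts in time to obtain (E7) and the Clausius--Duhem inequality.

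Direct differentiation of the free energy density gives $\sigma^{rev}=\partial w/\partial E=(v^2+\eta)A(Eu-p)=aAe$ and, using $Eu=e+p$, the pointwise thermodynamic force $T_p=-\partial w/\partial p=aAe$. The $E\dot u$-component of~(\ref{SE8}) involves the subdifferential of the smooth quadratic $\tfrac12\beta_1\,E\dot u:E\dot u$, which is the singleton $\{\beta_1 E\dot u\}$; combined with $\sigma=\sigma^{rev}+\sigma^{irrev}$ this yields (E4) for Model~1. The $\dot p$-component reads $T_p\in\tau\,\partial|\dot p|=\partial H(\dot p)$, that is $aAe\in\partial H(\dot p)$, which is~(\ref{model1}), hence~(E5).

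The $\dot v$-component is the delicate one: $T_v$ is now a functional on $H^1_D(\Omega)$ and $\partial_{\dot v}\phi_1(\dot v)=\partial D_{\text{S}}(\dot v)$ coincides with the normal cone at $\dot v$ to the convex cone $C=\{\xi\in H^1_D(\Omega):\xi\leq 0\text{ a.e.}\}$. Testing the inclusion $T_v\in N_C(\dot v)$ with $\tilde v=2\dot v$ and $\tilde v=\tfrac12\dot v$ forces $T_v\cdot\dot v=0$, so the condition reduces to $T_v\cdot\tilde v\leq 0$ for every $\tilde v\in C$. Translating back through $T_v=-\partial_v\mathcal{W}_1$, this is exactly the first-order optimality condition for minimising $v\mapsto\mathcal{E}_{el}(e(t),v)+\mathcal{E}_{S}(v)$ over $\{v\leq v(t),\ v=1\text{ on }\partial\Omega_D\}$. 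Since this functional is convex in $v$ (for fixed $e$) and the constraint set is convex, the first-order condition is sufficient for the global minimum, giving~(E6).

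For (E7), differentiate $\mathcal{W}_1$ along the trajectory to get $\dot{\mathcal{W}}_1=\int_\Omega\sigma^{rev}:E\dot u\,dx-\int_\Omega T_p:\dot p\,dx-T_v\cdot\dot v$. Plugging in $T_v\cdot\dot v=0$, the Fenchel identity $T_p:\dot p=H(\dot p)=\tau|\dot p|$ (valid because $H$ is positively one-homogeneous and $T_p\in\partial H(\dot p)$), and $\sigma^{rev}=\sigma-\beta_1 E\dot u$ yields $\dot{\mathcal{W}}_1=\int_\Omega\sigma:E\dot u\,dx-\beta_1\|E\dot u\|_2^{2}-\tau\int_\Omega|\dot p|\,dx$. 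An integration by parts in space against~(E3) rewrites $\int_\Omega\sigma:E\dot u\,dx=\langle l,\dot u\rangle+\int_{\partial\Omega_D}\sigma\vec n\cdot\dot w\,ds$; integrating in $t\in[0,T]$ and applying an integration by parts in time to $\int_0^T\langle l,\dot u\rangle\,dt$ produces exactly~(\ref{energy1}). Finally, writing $\mathcal{D}(t)=\int_\Omega\sigma^{irrev}:E\dot u\,dx+\int_\Omega T_p:\dot p\,dx+T_v\cdot\dot v=\beta_1\|E\dot u\|_2^{2}+\tau\int_\Omega|\dot p|\,dx+0\geq 0$ gives the Clausius--Duhem inequality. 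The hardest step is the $\dot v$ one: $T_v$ must be handled as an element of $H^1_D(\Omega)^\ast$ rather than pointwise, $\partial D_{\text{S}}$ must be identified with the appropriate normal cone, and the local variational inequality must be upgraded to the global infimum statement of~(E6) by invoking convexity of the phase-field energy in $v$.
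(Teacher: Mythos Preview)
Your proof is correct and follows essentially the same route as the paper: decouple~(\ref{SE8}) into its three components to obtain (E4), (E5), and the variational inequality for $v$, then differentiate $\mathcal{W}_1$ and use the homogeneity identities $T_v\cdot\dot v=0$ and $T_p:\dot p=\tau|\dot p|$ together with~(E3) to get~(E7) and the dissipation inequality. The only cosmetic difference is in the passage to~(E6): the paper expands the inequality $\partial_v\mathcal{W}_1\cdot(v(t)-\varphi)\leq 0$ term by term and applies Young's inequality to each cross term, whereas you invoke convexity of $v\mapsto\mathcal{E}_{el}(e,v)+\mathcal{E}_S(v)$ directly---your packaging is cleaner but the underlying computation is the same.
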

\begin{proof}:
Let $t\in[0,T_f]$. The relations (\ref{baba}) and (\ref{SE8}) lead to (E4)
\begin{eqnarray}
\sigma(t)=a(t)Ae(t)+\beta_1E\dot{u}(t). \label{SE10}
\end{eqnarray}
We also deduce from (\ref{SE8}) that 
\begin{eqnarray}
(v(t)^2+\eta)Ae(t) \in \partial H(\dot{p}(t)). \label{SE11}
\end{eqnarray}
which prove (E5). 
From (\ref{SE8}) we see that for every $\xi\leq0$ and $\xi=0$ on $\partial\Omega_D$
\begin{eqnarray}
\lefteqn{-\frac{\partial \mathcal{W}_1}{\partial v}(E,p,v)(t)(\xi-\dot{v}(t))}\notag\\
&=&
-\int_{\Omega}v(t)A(Eu(t)-p(t)):(Eu(t)-p(t))(\xi-\dot{v}(t))\notag\\
&+&
(2\varepsilon)^{-1}(v(t)-1)(\xi-\dot{v}(t))+2\varepsilon\nabla v(t)\nabla (\xi-\dot{v}(t))\,dx\leq0,\label{SE12}
\end{eqnarray}
so that we obtain
\begin{eqnarray}
\lefteqn{\int_{\Omega}v(t)A(Eu(t)-p(t)):(Eu(t)-p(t))(\dot{v}(t)-\xi)}\notag\\
&+&
(2\varepsilon)^{-1}(v(t)-1)(\dot{v}(t)-\xi)\,dx+\int_{\Omega}2\varepsilon\nabla v(t)\nabla(\dot{v}(t)-\xi)\,dx\leq0\label{thermottt}.
\end{eqnarray}
Testing (\ref{thermottt}) with $\xi=\dot{v}(t)+\varphi-v(t)$ where $\varphi \in H^1(\Omega)$, $\varphi \leq v(t)$, and $\varphi=1$ on $\partial\Omega_D$, implies that
\begin{eqnarray}
&&2\varepsilon\int_{\Omega} \nabla v(t)\nabla(v(t)-\varphi) \,dx+\int_{\Omega}v(t)A e(t):e(t)(v(t)-\varphi)\,dx\notag\\
&+&(2\varepsilon)^{-1}\int_{\Omega}(v(t)-1)(v(t)-\varphi)\,dx \leq 0,  \label{thermot4}
\end{eqnarray}
for every $\varphi \in H^1(\Omega)$, $\varphi \leq v(t)$,  and  $\varphi =1$ on $\partial\Omega_D$.
We rewrite (\ref{thermot4}) as follows
\begin{eqnarray}
\lefteqn{2\varepsilon\int_{\Omega} \nabla v(t)\nabla v(t) \,dx+\int_{\Omega}v(t)A e(t):e(t)v(t)\,dx+(2\varepsilon)^{-1}\int_{\Omega}(v(t)-1)v(t)\,dx } \notag\\
&\leq&
2\varepsilon\int_{\Omega} \nabla v(t)\nabla \varphi \,dx+\int_{\Omega}v(t)A e(t):e(t)\varphi\,dx+(2\varepsilon)^{-1}\int_{\Omega}(v(t)-1)\varphi\,dx.\notag\\\label{thermot5}
\end{eqnarray}
Using the Cauchy inequality yields
\begin{eqnarray}
 2\varepsilon\int_{\Omega} \nabla v(t)\nabla \varphi \,dx\leq\varepsilon\int_{\Omega} \arrowvert\nabla v(t)\arrowvert^2\,dx+\varepsilon\int_{\Omega} \arrowvert\nabla \varphi\arrowvert^2\,dx,\notag
\end{eqnarray}
\begin{eqnarray}
 \int_{\Omega}v(t)A e(t):e(t)\varphi\,dx\leq\dfrac{1}{2}\int_{\Omega}v^2(t)A e(t):e(t)\,dx+\dfrac{1}{2}\int_{\Omega}\varphi^2 A e(t):e(t)\,dx.\notag
\end{eqnarray}
We rewrite 
\begin{eqnarray*}
&&(v(t)-1)\varphi=(v(t)-1)(\varphi-1)+(v(t)-1),\\ 
&&(v(t)-1)v(t)-(v(t)-1)=(v(t)-1)^2,
\end{eqnarray*}
and it follows that
\begin{eqnarray}
\mathcal{E}_{el}(e(t),v(t))+ \mathcal{E}_{S}(v(t))\leq \mathcal{E}_{el}(e(t),\varphi)+\mathcal{E}_{S}(\varphi)
\end{eqnarray}
for all $\varphi \in H^1(\Omega)$, $\varphi \leq v(t)$,  and  $\varphi =1$ on $\partial\Omega_D$, which proves (E6).
We now prove the energy balance formula. First we differentiate $\mathcal{W}_1(t,E(u(t)),p(t),v(t))$ in time:
\begin{eqnarray}
\lefteqn{\dfrac{d}{dt} \mathcal{W}_1(t,E(u(t)),p(t),v(t))=\int_{\Omega}a(t)A(Eu(t)-p(t)):(E\dot{u}(t)-\dot{p}(t))\,dx}\notag\\
&+&\int_{\Omega}v(t)A(Eu(t)-p(t)):(Eu(t)-p(t))\dot{v}(t)+(2\varepsilon)^{-1}(v(t)-1)\dot{v}(t)\,dx\notag\\
&+&2\int_{\Omega}\varepsilon\nabla v(t)\nabla \dot{v}(t)\,dx\label{I1}.
\end{eqnarray}
Testing inequality (\ref{SE12}) with $\xi=0$ and $\xi=2\dot{v}(t)$ leads to 
\begin{eqnarray}
-\frac{\partial \mathcal{W}_1}{\partial v}(E,p,v)(t)\dot{v}(t)=0.\label{I2}
\end{eqnarray}
From (\ref{I1}) and (\ref{I2}) we deduce that
\begin{eqnarray}
\lefteqn{\dfrac{d}{dt} \mathcal{W}_1(t,Eu(t),p(t),v(t))}\notag\\
&=&
\int_{\Omega}a(t)A(Eu(t)-p(t)):(E\dot{u}(t)-\dot{p}(t))\,dx \notag\\
&=&\int_{\Omega}\sigma(t):E\dot{u}(t)\,dx-\int_{\Omega}\beta_1 E\dot{u}(t):E\dot{u}(t)\,dx\notag\\
&-&\int_{\Omega} a(t)A(Eu(t)-p(t)):\dot{p}(t)\,dx.\label{I3}
\end{eqnarray}
The equilibrium equation (E3) gives 
\begin{eqnarray}
\int_{\Omega}\sigma(t):E\dot{u}(t)\,dx=\int_{\partial\Omega_D}\sigma(t)\vec{n}.\dot{w}(t) \,ds.\label{I4}
\end{eqnarray}
By definition of the subgradient, (\ref{SE11}) leads to a variational inequality: for all admissible $q \in L^2(\Omega,\mathbb{M}_{\text{sym}}^{2\times2})$ we have  
\begin{eqnarray}
\tau\int_{\Omega}|q|\,dx \ge \tau\int_{\Omega}|\dot{p}(t)|\,dx+\int_{\Omega} a(t)A(Eu(t)-p(t)) :(q-\dot{p}(t))\,dx \label{model4}
\end{eqnarray}
Testing (\ref{model4}) with $q=0$ et $q=2\dot{p}(t)$ implies that
\begin{eqnarray}
\int_{\Omega} a(t)A(Eu(t)-p(t)) :\dot{p}(t)\,dx=\tau\int_{\Omega}|\dot{p}(t)|dx.\label{mod2}
\end{eqnarray}
So that we deduce from (\ref{I3}), (\ref{I4}), (\ref{mod2}) that
\begin{eqnarray}
\dfrac{d}{dt} \mathcal{W}_1(t,Eu(t),p(t),v(t))&=& -\int_{\Omega}\beta_1 E\dot{u}(t):E\dot{u}(t)\,dx-\tau\int_{\Omega}|\dot{p}(t)|\,dx\notag\\
 &+&\int_{\partial\Omega_D}\sigma(t)\vec{n}.\dot{w}(t) \,ds. \label{mod3}
\end{eqnarray}
Integrating (\ref{mod3}) over $[0, T]$, for every $0\leq T\leq T_f$ shows that the balance formula (E7) holds.
Finally, from  (\ref{I4}) and (\ref{mod3}) we deduce that
\begin{eqnarray*}
\lefteqn{\mathcal{D}(t):=\int_{\Omega}\sigma(t):E\dot{u}(t)\,dx-\dot{\mathcal{W}_1}(t)}\\
&=&
\int_{\Omega}\beta_1 E\dot{u}(t):E\dot{u}(t)\,dx+\tau\int_{\Omega}|\dot{p}(t)|\,dx\geqslant 0.
\end{eqnarray*}
\hfill$\square$
\end{proof}

\begin{remark}
\begin{enumerate}
 \item The assumption (\ref{SE8}) is stronger than (E6).
 \item Theorem~\ref{Theorem1} also holds for Models 2 and 3 with the following choices of free energies and dissipation potentials:
\begin{itemize}
 \item for Model 2:
\begin{eqnarray*}
\lefteqn{\mathcal{W}_2(t,Eu(t),p(t),v(t))}\\
&:=&
\dfrac{1}{2} \int_{\Omega}\left( v(t)^{2}+\eta\right)A(Eu(t)-p(t)):(Eu(t)-p(t)) \,dx\\
\quad\quad&+& 
\int_{\Omega} \varepsilon \vert\nabla v(t)\vert^{2} dx+ \int_{\Omega}\frac{\left(1-v(t)\right)^{2}}{4\varepsilon} \,dx.
\end{eqnarray*}
and
\begin{eqnarray*}
\phi_2(t,E\dot{u}(t),\dot{p}(t),\dot{v}(t))= \dfrac{1}{2} \beta_2\dot{p}(t):\dot{p}(t)+ \tau |\dot{p}(t)|+D_{S}(\dot{v}(t)).
\end{eqnarray*}
 \item for Model 3:
\begin{eqnarray*}
\lefteqn{\mathcal{W}_3(t,Eu(t),p(t),v(t))}\\
&:=&  \dfrac{1}{2} \int_{\Omega}\left( v(t)^{2}+\eta\right)A(Eu(t)-p(t)):(Eu(t)-p(t)) \,dx\notag\\
&+&
\dfrac{1}{2} \int_{\Omega} Bp(t):p(t)\,dx+\int_{\Omega} \varepsilon \vert\nabla v(t)\vert^{2} dx+ \int_{\Omega}\frac{\left(1-v(t)\right)^{2}}{4\varepsilon} \,dx,
\end{eqnarray*}
and
\begin{eqnarray*}
\phi_3(t,E\dot{u}(t),\dot{p}(t),\dot{v}(t))= \tau |\dot{p}(t)|+D_{S} (\dot{v}(t)).
\end{eqnarray*}
\end{itemize}
\end{enumerate}
\end{remark}
\section{Discrete-time evolutions for Models 1-3.}
We now approximate the continuous-time  evolutions of the constructed models via discrete time evolutions obtained by solving incremental variational problems. 
We describe the discrete-time evolution of the medium as follows:
we consider a partition of the time interval $[0,T_f]$ into $N_f$ sub-intervals of equal length $h$:
\begin{eqnarray*}
 0=t_h^0<t_h^1<...<t_h^n<...<t_h^{N_f}=T_f, \quad \text{with}\quad h=\dfrac{T_f}{N_f}= t_h^n-t_h^{n-1}\rightarrow0.
\end{eqnarray*}
We define
\begin{eqnarray*}
\mathcal{B}(t_h^n)&:=&\left\{(z,q,\varphi)\in H^1(\Omega)\times L^2(\Omega)\times H^1(\Omega);
\begin{array}{ll} 
        z=w_h^n ,&\text{on}\quad \partial \Omega_D,\\
         \varphi=1, & \text{on}\quad\partial \Omega_D,\notag\\
         \varphi\leq v_h^{n-1},&\text{in}\quad\Omega.\notag
      \end{array}
\right\}.
\end{eqnarray*}
Let us assume that for $n\geqslant1$, the approximate evolution $(u_h^{n-1}, v_h^{n-1}, p_h^{n-1})\in\mathcal{B}(t_h^{n-1})$ is known at $t_h^{n-1}$.
We seek $(u_h^{n}, v_h^{n}, p_h^{n})$ at time $t_h^n$ as the solution to the following variational problem:  
\begin{eqnarray}
 \min_{(z,q,\varphi)\in \mathcal{B}(t_h^n)}\mathcal{E}_{total}(z,q,\varphi,u_h^{n-1}, v_h^{n-1}, p_h^{n-1}),\label{Problem1}
\end{eqnarray}
where, for each model, $\mathcal{E}_{total}$ is defined as follows:
\begin{enumerate}
 \item Model 1: Elasto-plasticity, viscoelasticity and fracture:
\begin{eqnarray*}
\mathcal{E}^1_{total}(z,q,\varphi,u_h^{n-1}, v_h^{n-1}, p_h^{n-1})&=&\mathcal{E}_{el}(\varphi,E(z)-q)+\mathcal{E}_{p}(q,p_h^{n-1})\\
&+&\mathcal{E}_{ve}(z,u_h^{n-1})+\mathcal{E}_{S}(\varphi).
\end{eqnarray*}
 \item Model 2: Elasto-plasticity, viscoplasticity and fracture:
\begin{eqnarray*}
\mathcal{E}^2_{total}(z,q,\varphi,u_h^{n-1}, v_h^{n-1}, p_h^{n-1})&=&\mathcal{E}_{el}(\varphi,E(z)-q)+\mathcal{E}_{p}(q,p_h^{n-1})\\
&+&\mathcal{E}_{vp}(q,p_h^{n-1})+\mathcal{E}_{S}(\varphi).
\end{eqnarray*}
 \item Model 3: Elasto-plasticity, linear kinematic hardening and fracture:
\begin{eqnarray*}
\mathcal{E}^3_{total}(z,q,\varphi,u_h^{n-1}, v_h^{n-1}, p_h^{n-1})&=&\mathcal{E}_{el}(\varphi,E(z)-q)+\mathcal{E}_{p}(q,p_h^{n-1})\\
&+&\mathcal{E}_{h}(q)+\mathcal{E}_{S}(\varphi).
\end{eqnarray*}
One can easily prove that for $i=1,2,3$ the variational problem
\begin{eqnarray}
 \min_{(z,q,\varphi)\in \mathcal{B}(t_h^n)}\mathcal{E}^i_{total}(z,q,\varphi,u_h^{n-1}, v_h^{n-1}, p_h^{n-1}).\label{Problem2}
\end{eqnarray}
has at least one solution. If $(z_n,q_n,\varphi_n)_n$ is a minimizing subsequence, that one easily checks that $\parallel z_n \parallel_{H^1}$, $\parallel p_n \parallel_{L^2}$, 
$\parallel  \varphi_n \parallel_{H^1}$ are uniformly bounded, so that a subsequence converges weakly to some $(z,q,\varphi)$.
The only difficulty in passing to the limit in $\mathcal{E}_{total}$ comes from the term
\begin{eqnarray*}
\int_{\Omega} \varphi_n^2 A(Ez_n-q_n):(Ez_n-q_n)\,dx,
\end{eqnarray*}
which can be rewritten as
\begin{eqnarray*}
\int_{\Omega}  A\varphi_n(Ez_n-q_n):\varphi_n(Ez_n-q_n)\,dx,
\end{eqnarray*}
and one can use the fact that since
\begin{eqnarray*}
& \displaystyle Ez_n \rightharpoonup Ez&  \quad \text{weakly in} \quad L^{2}, \\
& \displaystyle \varphi_n \rightharpoonup \varphi&  \quad \text{weakly in} \quad H^{1}, \\
& \displaystyle q_n \rightharpoonup q&  \quad \text{weakly in} \quad L^{2},
\end{eqnarray*}
one has
\begin{eqnarray*}
\displaystyle \varphi_n(Ez_n-q_n) \rightharpoonup \varphi(Ez-q)&  \quad \text{weakly in} \quad L^{2}.
\end{eqnarray*}
Note however that $\mathcal{E}_{total}$ is not convex because of the no quadratic term $\varphi^2 (E(z)-q):(E(z)-q)$ in the elastic energy.
So that there might be several solutions to (\ref{Problem2}).
\end{enumerate}
\subsection{An alternate minimization algorithm and backtracking for materials with memory}
A solution $(u_h^n, v_h^n, p_h^n)$ of problem (\ref{Problem2}) is characterized by a system of one equality and two variational inequalities. Such a system is not easy to solve numerically.
For this reason we propose to solve (\ref{Problem2}) at each time step $t_h^n$ using an alternate minimization algorithm. The advantage of this approach
is that the problem (\ref{Problem2}) is separately strictly convex in each variable, so that each alternating step has a unique solution.
\subsubsection{An alternate minimization algorithm}
 \begin{algorithm}
\caption{Alternate minimization algorithm}
Let $\delta_1>0$ and $\delta_2>0$ be fixed tolerance parameters.
\begin{enumerate}
\item Let $m=0$, $v^n_{(m=0)}=v_h^{n-1}$ 
\item \textbf{iterate}
\item Find $(u^n_{(m)}, p^n_{(m)}):={\argmin}_{(u,p)} \mathcal{E}_{total}(u,p,v^n_{(m-1)})$
\item \hspace*{2cm} Let $l=0$, $p^n_{(l=0)}=p_h^{n-1}$
\item \hspace*{2cm} \textbf{iterate} 
\item \hspace*{2cm} $u^n_{(l)}:={\argmin}_{u} \mathcal{E}_{total}(u,p^n_{(l-1)},v^n_{(m-1)} )$
\item \hspace*{2cm} $p^n_{(l)}:={\argmin}_{p} \mathcal{E}_{total}(u^n_{(l)},p,v^n_{(m-1)} )$
\item \hspace*{2cm} \textbf{until} $\parallel u^n_{(l)}-u^n_{(l-1)}\parallel_{H^1}\leq\delta_1$
\item \hspace*{2cm} We define $u^n_{(m)}:=u^n_{(l)}$ and $p^n_{(m)}:=p^n_{(l)}$ at convergence
\item Find $v^n_{(m)}:={\argmin}_{v} E_{total}(u^n_{(m)},p^n_{(m)},v)$
\item \textbf{until} $\parallel v^n_{(m)}-v^n_{(m-1)}\parallel_{H^1}\leq\delta_2$
\item We define $u_h^n:=u^n_{(m)}$, $p_h^n:=p^n_{(m)}$ and $v_h^n:=v^n_{(m)}$ at convergence
\end{enumerate}
\end{algorithm}
In practice, it is not exactly the variational problems described
in the alternating procedure above, that one solves, but the associated
first-order optimality conditions of problems that have been discretized 
in space too. This may introduce local minima, as the following example of a traction 
experiment of a 1D bar illustrates.
Assume that $u, v \in W^{1,\infty}(0,T_f,\Omega)$ represent the displacement
and phase-field marker of a 1D bar that lies in $\Omega = (0,L)$. We consider
a simple model of evolution with only elasticity and fracture where the total energy writes
\begin{eqnarray*}
\mathcal{E}_{total}(u,v)&=&\mathcal{E}_{el}(v,u^{'}) + \mathcal{E}_S(v)\\
&=&\dfrac{1}{2} \int_{\Omega} (v^2 + \eta) K (u^{'})^2 \,dx+\int_{\Omega }\varepsilon(v^\prime)^2 +\frac{(1-v)^2}{4 \varepsilon}\,dx,
\end{eqnarray*}
where $K > 0$ is a fixed Young modulus and where the primes denote derivatives
with respect to $x$. The bar is crack-free at $t=0$ and thus $v(x,0) = 1$.
It is fixed at $x=0$, while a uniform traction $u(L,t) = tL$ is applied at the other extremity.
If $u^\prime$ is close to a constant at time $t$, say $u^\prime(x,t) \sim t$, then the Euler-Lagrange optimality
condition for minimization of the total energy with respect to $v$ amounts to solving
\begin{eqnarray*}
v^{\prime\prime} -\left(\frac{1}{4\varepsilon^2}+\dfrac{Kt^2}{2\varepsilon}\right) v+\frac{1}{4\varepsilon^2}=0,
\end{eqnarray*}
the solution of which is
\begin{eqnarray*}
v(x,t)=C_1e^{-\frac{x\sqrt{2}\sqrt{4Kt^2\varepsilon+2}}{4\varepsilon}}+C_2e^{\frac{x\sqrt{2}\sqrt{4Kt^2\varepsilon+2}}{4\varepsilon}},
\end{eqnarray*}
with
\begin{eqnarray*}
C_1:=\frac{e^{\frac{L\sqrt{2}\sqrt{4Kt^2\varepsilon+2}}{4\varepsilon}}-1}{\left(2kt^2\varepsilon+1\right) \left( e^{-\frac{L\sqrt{2}\sqrt{4Kt^2\varepsilon+2}}{4\varepsilon}}-e^{\frac{L\sqrt{2}\sqrt{4Kt^2\varepsilon+2}}{4\varepsilon}}\right)},
\end{eqnarray*}
and
\begin{eqnarray*}
C_2:=\frac{e^{-\frac{L\sqrt{2}\sqrt{4Kt^2\varepsilon+2}}{4\varepsilon}}-1}{\left(2kt^2\varepsilon+1\right) \left( e^{-\frac{L\sqrt{2}\sqrt{4Kt^2\varepsilon+2}}{4\varepsilon}}-e^{\frac{L\sqrt{2}\sqrt{4Kt^2\varepsilon+2}}{4\varepsilon}}\right)}.
\end{eqnarray*}
These profiles are indeed what one obtains in the course of the numerical
computations according to the algorithm described above, 
until $t$ reaches a sufficiently large value so that the
term $\int_\Omega (v^2 + \eta) K (u^{'})^2\,dx$ dominates $\int_\Omega \frac{(1-v)^2}{4\varepsilon}\,dx$ 
in the energy, see Figures~\ref{PV}, ~\ref{DERIVEEU} and \ref{ener} (we use the same parameters as in \cite{BourdinFrancfortMarigo} by Bourdin).
Note that, due to the presence of the exponentials in the expression of $v$,
these profiles vary significantly near the extremities $x=0$ and $x=L$ of the beam, 
but are quite flat otherwise, and do not correspond to the picture of a generalized
crack as that depicted in Figure~\ref{rupture}.
Further, the corresponding states $u,v$ are only local minima,
as one can build states with lower total energy, as the examples below show.
We note that choosing $\varepsilon$ smaller does not improve the situation for that matter.
This hurdle had been noticed earlier by Bourdin~\cite{Bourdin2}, \cite{Bourdin}, who suggested to complement
the numerical algorithm with a supplementary step called backtracking, 
where after each iteration in time, one imposes a necessary condition derived 
from the definition~(\ref{Problem1}) of the discrete-time evolution. We extend this idea in the context 
of our models, where plasticity and viscous dissipation may occur as well.
\begin{figure}[htbp!]
   \begin{minipage}[c]{.46\linewidth}
\hspace*{-8mm}     
 \includegraphics[width=8cm]{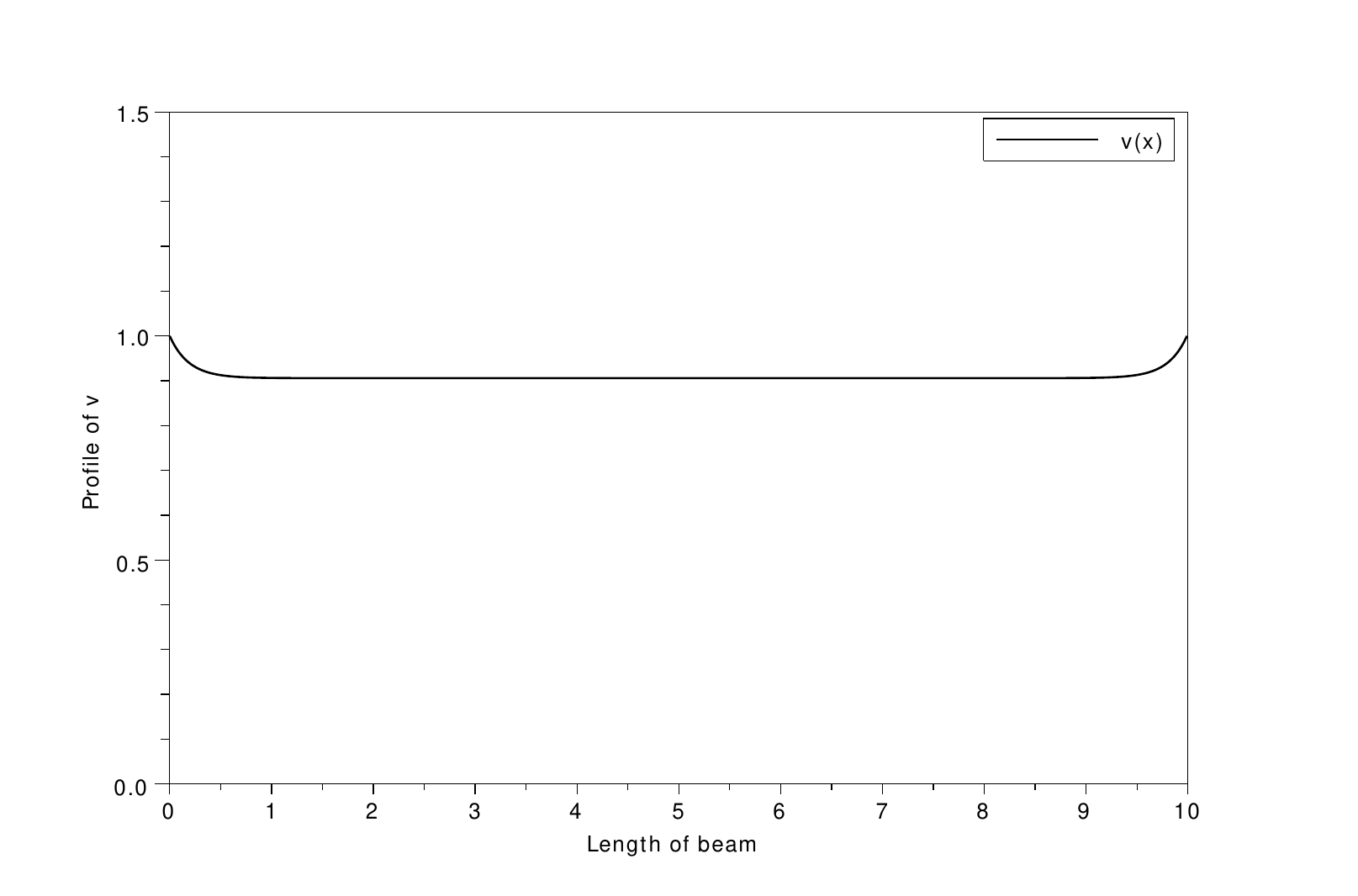}
\caption{Profile of $v(t,.)$ during an elastic evolution with fracture at time $t=4$.}
\label{PV}
 \end{minipage} 
 \hspace*{10mm}
\begin{minipage}[c]{.46\linewidth}
 \includegraphics[width=8cm]{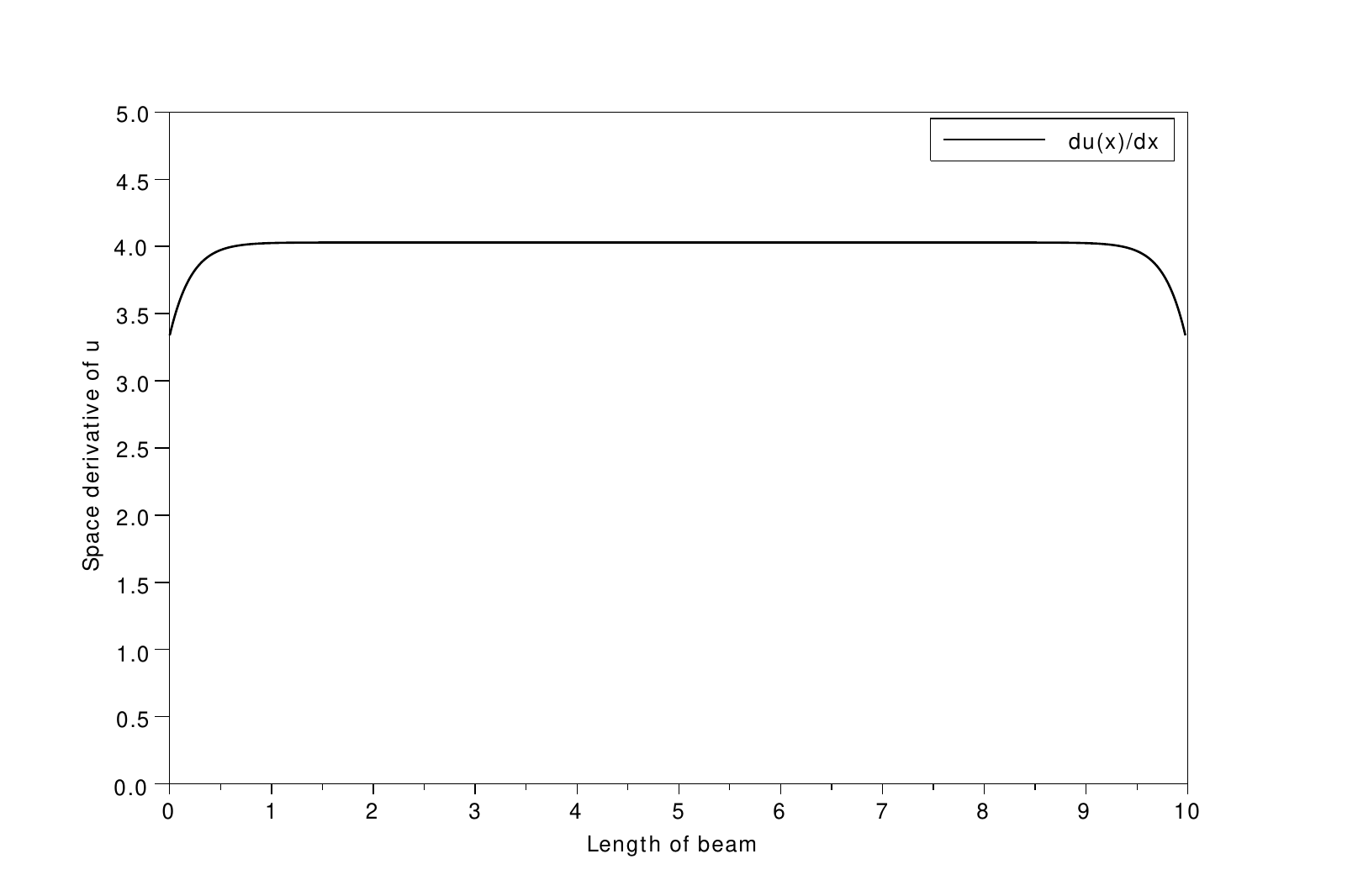}
\caption{Profile of $u{'}(t,.)$  during an elastic evolution with fracture at time $t=4$.}
\label{DERIVEEU}
\end{minipage}
\end{figure}

\begin{figure}[htbp!]
\centering
\hspace*{-8mm}     
 \includegraphics[width=8cm]{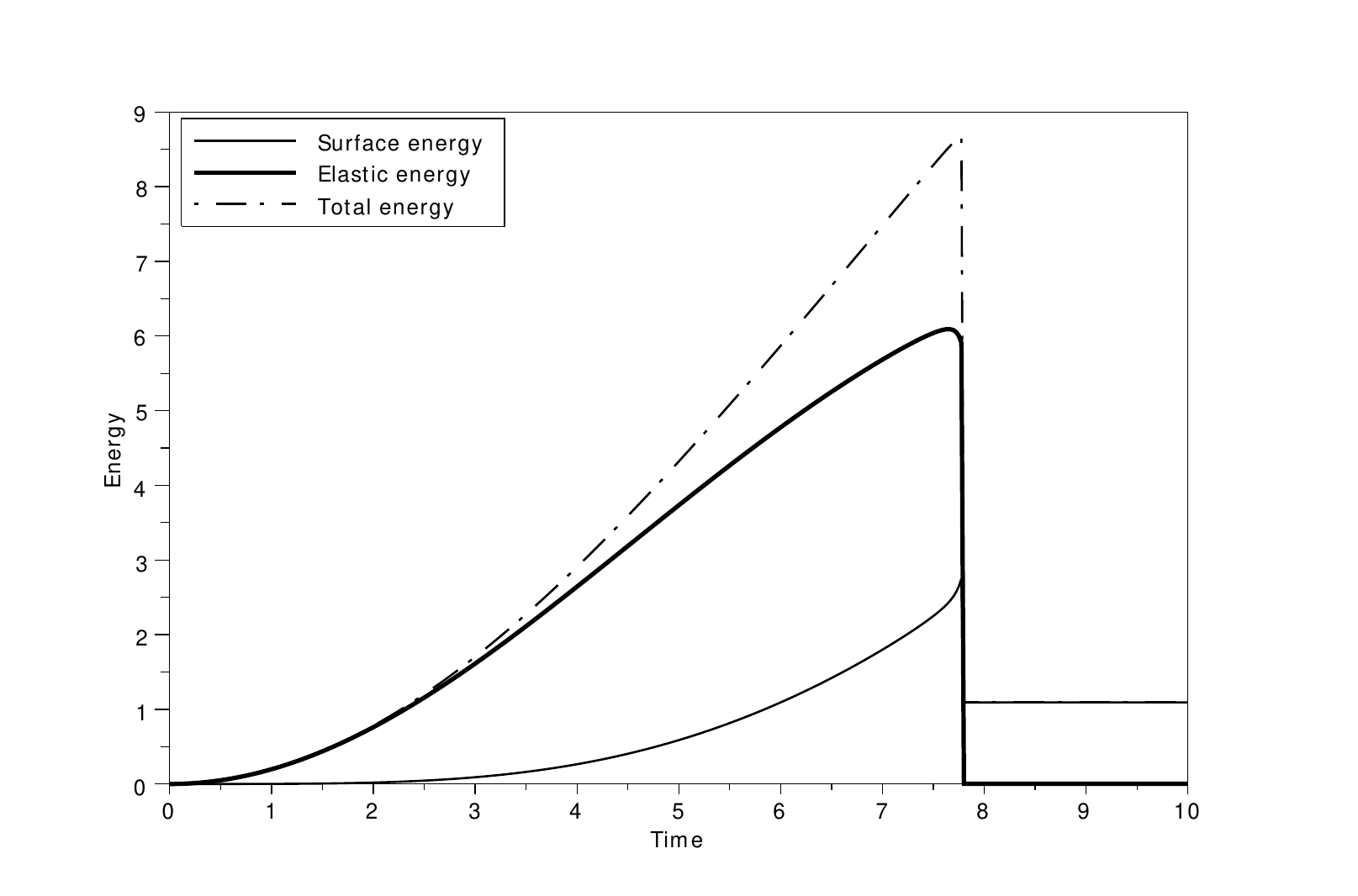}
\caption{Evolution of the total, elastic, and surface energies for the 1D traction experiment without backtracking.}
\label{ener}
\end{figure}
\subsubsection{Backtracking}
Because the loading is monotonous, if $(u_h^n,p_h^n,v_h^n)$ is a solution of (\ref{Problem1}) at time $t_h^n$, 
then $(\frac{t_h^j}{t_h^n} u^n,\frac{t_h^j}{t_h^n} p^n,v_h^n)$ is admissible at time $t_h^j$. Thus, we must have 
\begin{eqnarray}
\mathcal{E}_{total}(u_h^j,p_h^j,v_h^j)\leq \mathcal{E}_{total}((\frac{t_h^j}{t_h^n} u^n,\frac{t_h^j}{t_h^n} p^n,v_h^n)).
\end{eqnarray}
Numerically we check this condition for all $t_h^j\leq t_h^n$. If there exists some j such that this condition is not verified
 $(u_h^j,p_h^j,v_h^j)$ cannot be a global minimizer for time $t_h^j$; and we backtrack to time $t_h^j$ for the alternate minimization algorithm with initialization $v^j_{(m=0)}=v_h^n$ and $p^j_{(l=0)}=p_h^{j-1}$.
\section{Dissipation phenomena appearing during deformation - 1D and 2D numerical experiments}
In this section, we study some evolution problems for Models 1-3 in terms of their  mechanical parameters, to check if
during evolution several dissipation phenomena can be observed.
\subsection{1D-traction numerical experiments with fracture}
We consider a beam $\Omega=(0,L)$ of length L, the Young modulus $K>0$. It is clamped at $x=0$.
A Dirichlet boundary condition $u(L,t)$=tL is imposed at its right extremity $x=L$.
At each time step $t_h^n$ we use  P1-elements to approximate $u$ and $v$, and P0-elements for $p$.
\subsubsection{Elasto-perfectly plastic case with fracture}\label{Model0}
If we take $\beta_1=0$ in Model 1 or $\beta_2=0$ in Model 2, these models reduce to perfect plasticity with numerical fracture.
\begin{eqnarray}
\min_{(z,q,\varphi)\in \mathcal{B}(t_h^n)} \mathcal{E}_{el}(\varphi,E(z)-q)+\mathcal{E}_{p}(q,p_h^{n-1})+\mathcal{E}_{S}(\varphi).\label{backtrack}
\end{eqnarray}
In this example, we illustrate the importance of the backtracking step.
We apply the alternate minimization algorithm without backtracking with the following parameters: $L=10$, $K=4$, $\tau=1.5$, the space discretization mesh size $\bigtriangleup x=0.015$, the time step $h=0.025$, $\eta=10^{-6}$, $\varepsilon=0.094$. 
With this choice of parameters, we observe in Figure~\ref{Figure1} that if the beam is elastic ($v=1$, $p=0$) at time $t=0$, it remains elastic until the time  $t\simeq0.5$ when the beam becomes plastic, then a crack appears at $t\simeq3$.
Because the loading is monotonous, if the system is crack free at $t=0$ and if $p=0$, 
it should remain in the elastic regime until the stress reaches the yield surface or until a crack appears. It is easy to check that if there is no crack,
the yield stress should be reached at time $t_p=\tau/K$, while if there is no plastic deformation, a crack should appear at time $t_c=\sqrt{2/KL}$.
With the given choice of parameters, we obtain $t_p=0.375$ and $t_c=0.224$.
When we compare with Figure~\ref{Figure1}, we see that the beam deforms elastically until plastic deformation takes place at time $t\simeq0.5$,
far from the predicted value.
Figure~\ref{Figure2} shows the same traction experiment computed with the backtracking step. We see that the elastic medium cracks at the computed time $t\simeq0.25$ 
which is close to the expected theoretical
crack time $t_c=0.224$. Before plastic deformation takes place.
\begin{figure}[htbp!]
\begin{center}
 \centering
\includegraphics[width=8cm]{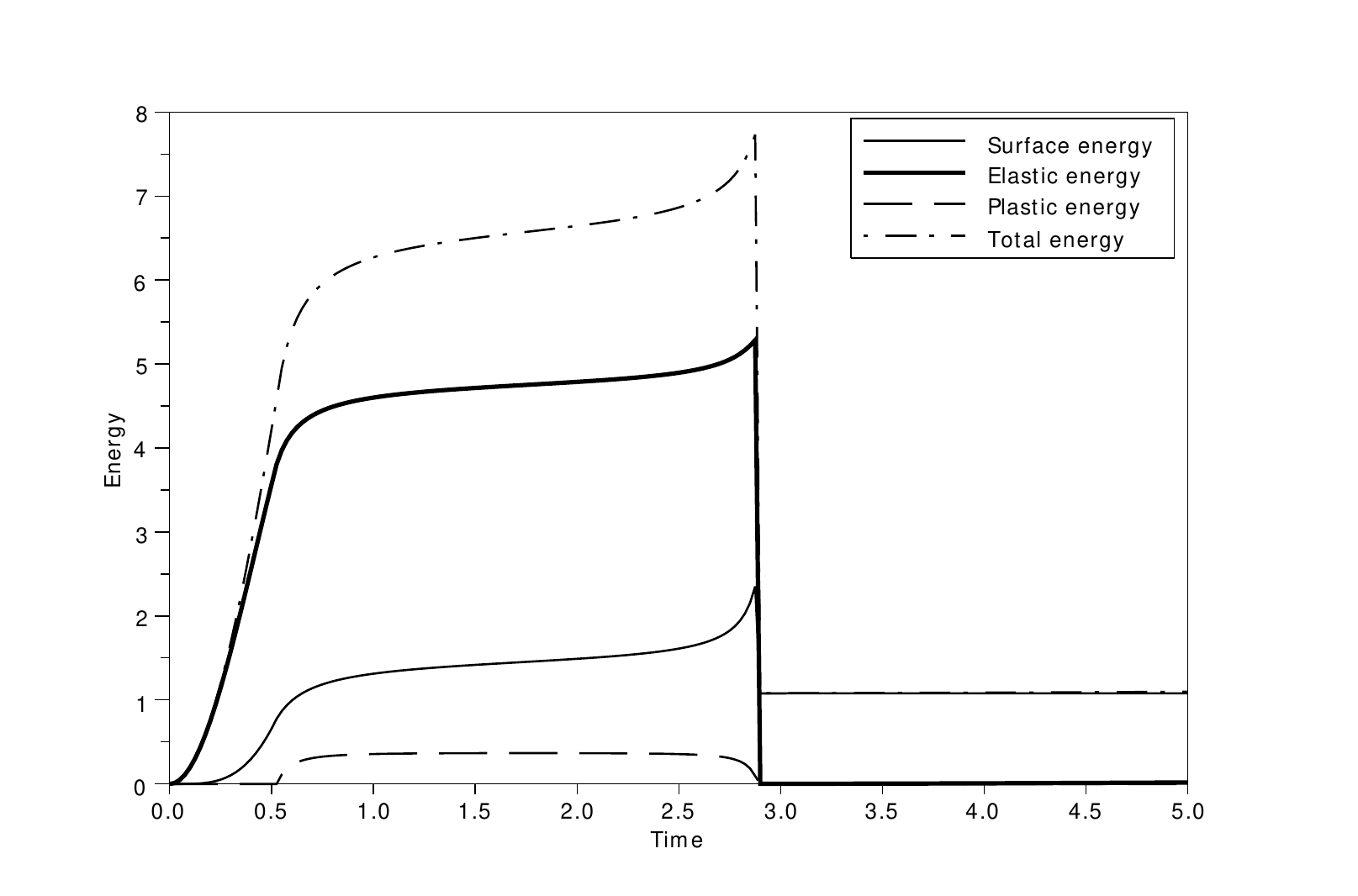}
\end{center}
\caption{Evolution of the total, elastic, plastic and surface energies for the 1D traction experiment without backtracking, $\tau=1.5$.}
\label{Figure1}
\end{figure}
We now change the plastic parameter to $\tau=0.8$ so that the expected plastic time should be $t_p=0.2$. 
Figure~\ref{Figure3} shows that, as expected since now $t_p<t_c$, plastic deformation occurs first. As this model does not allow the elastic energy
to grow once plastic deformation has taken place, no crack appears after $t_p$.
In all the following experiments, the backtracking strategy is used.
\begin{figure}[htbp!]
   \begin{minipage}[c]{.46\linewidth}
\hspace*{-10mm}     
 \includegraphics[width=8cm]{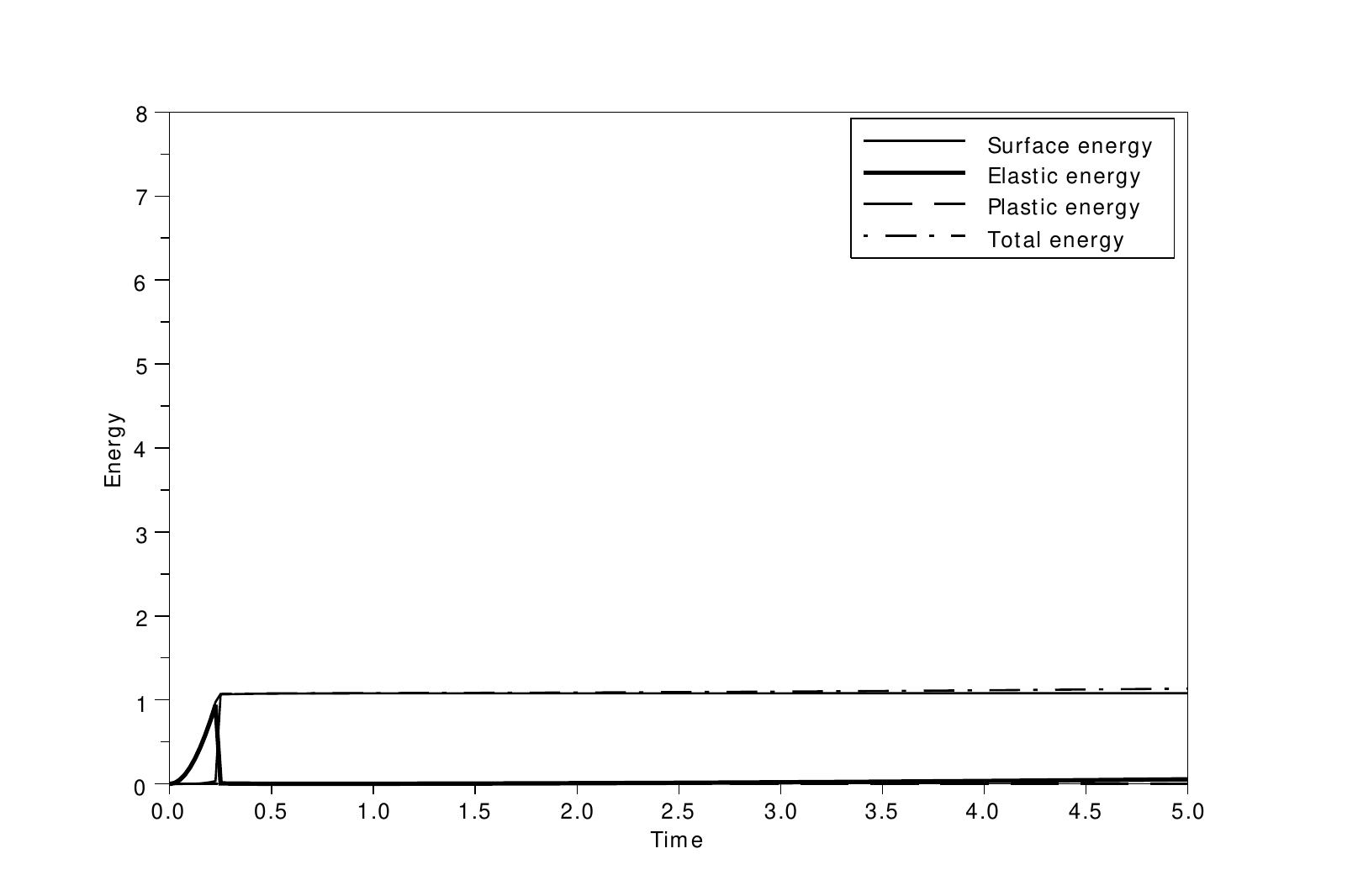}
\caption{Evolution of the total, elastic, plastic and surface energies for the 1D traction experiment with backtracking, $\tau=1.5$.}
\label{Figure2}
   \end{minipage} \hfill
   \begin{minipage}[c]{.46\linewidth}
\hspace*{-10mm}
      \includegraphics[width=8cm]{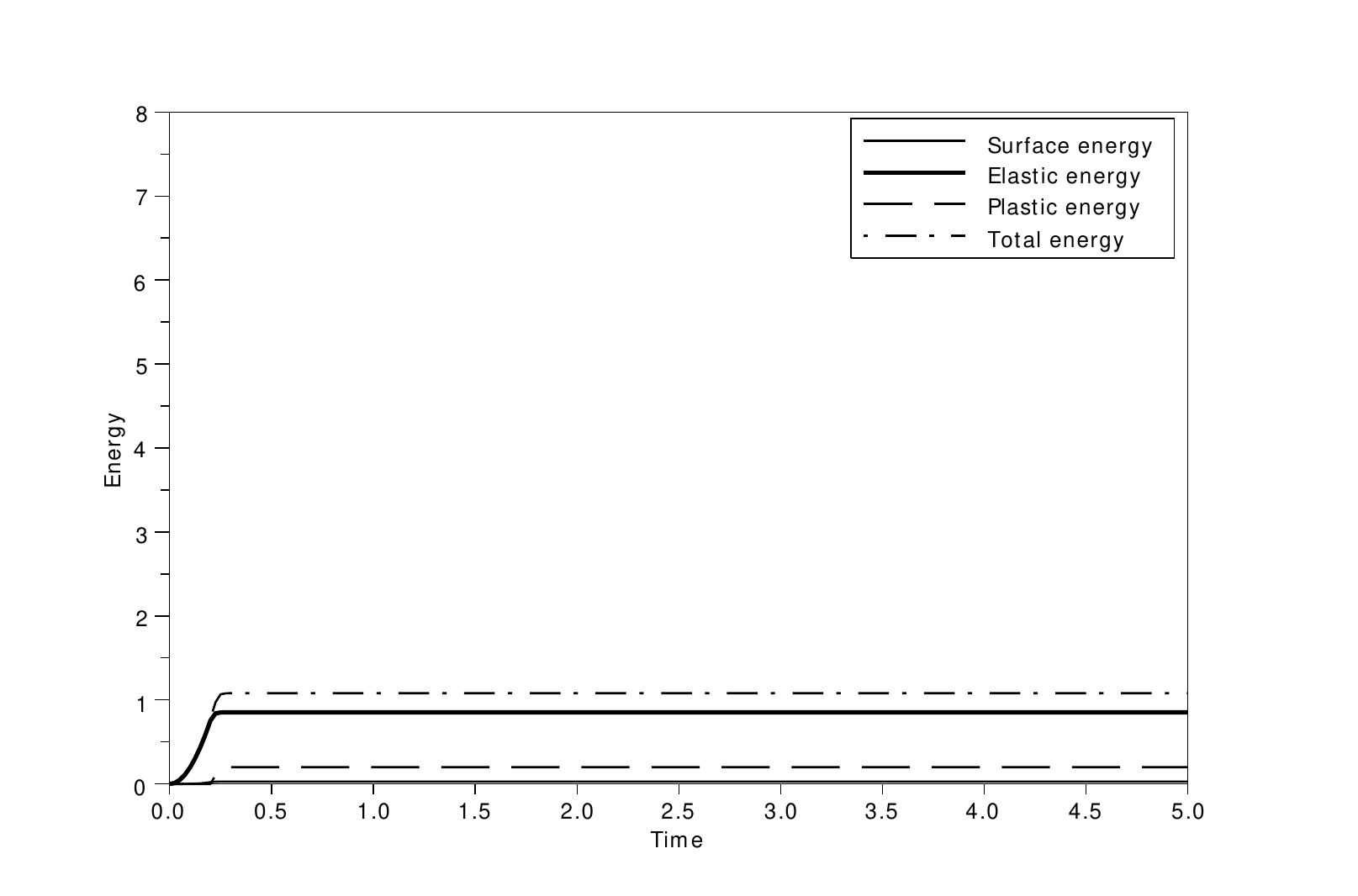}
\caption{Evolution of the total, elastic, plastic and surface energies for the 1D traction experiment with backtracking, $\tau=0.8$.}
\label{Figure3}
   \end{minipage}
\end{figure}
In the sequel, the same discretization parameters are used, and the Young Modulus is chosen as in \ref{Model0}.   
\subsubsection{Model 1 - Elasto-plastic model with visco-elasticity and fracture.}
As can be seen, Model 1 can express the various dissipation mechanisms: elasto-plasticity only (Figure.~\ref{first}), elasticity with fracture (Figure.~\ref{second}),
viscoelasticity with fracture (Figure.~\ref{third}), and elasto-visco-plasticity with fracture (Figure.~\ref{four}).
\begin{figure}[htbp!]
 \begin{minipage}[c]{.46\linewidth}
\hspace*{-10mm}
      \includegraphics[width=8cm]{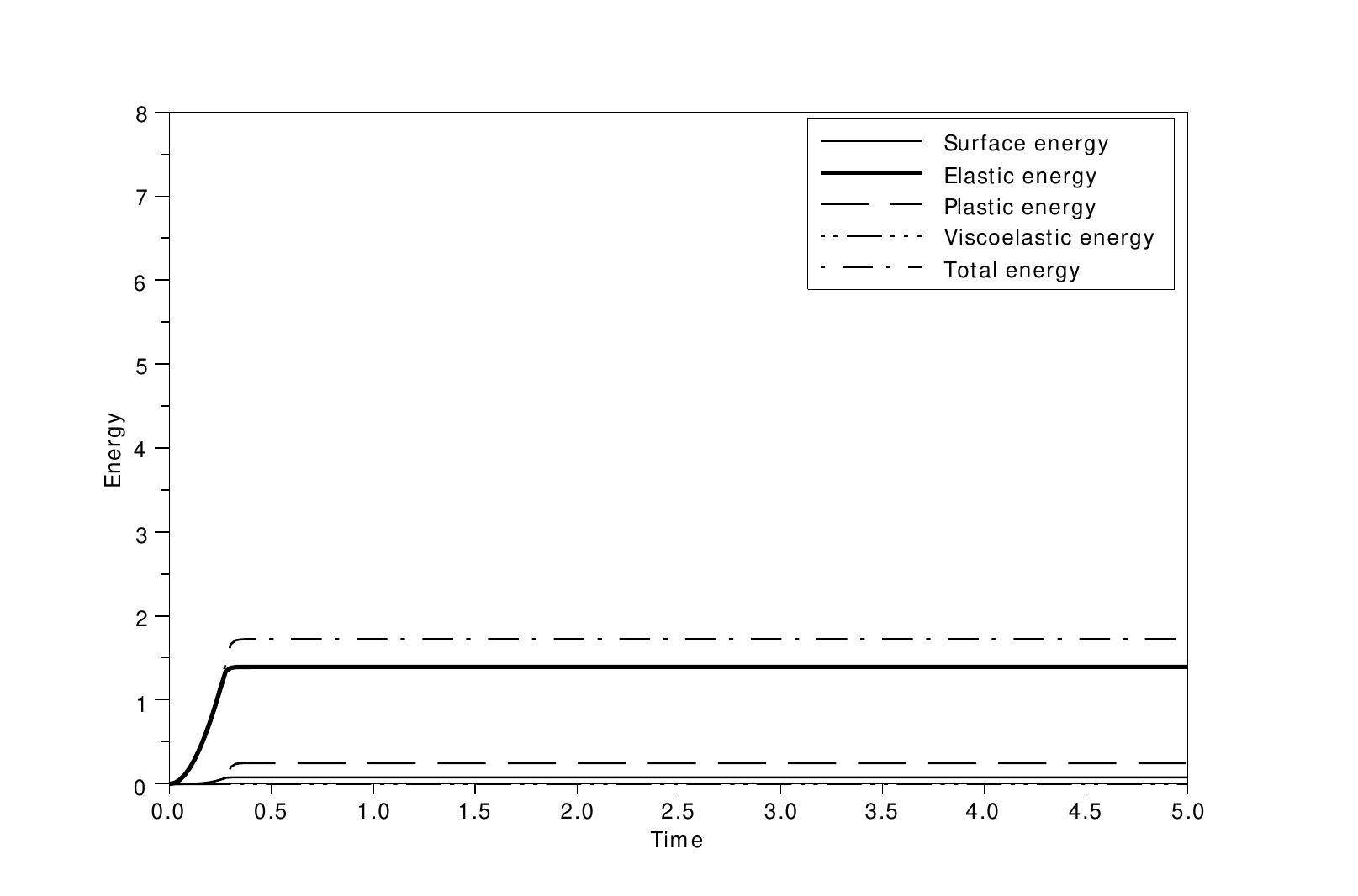}
\caption{Evolution of the total, elastic, plastic,  viscoelastic  and surface energies for the 1D traction experiment with backtracking, $\tau=1$, $\beta_1=0.01$.}
\label{first}
   \end{minipage}  
 \begin{minipage}[c]{.46\linewidth}
\hspace*{-10mm}
      \includegraphics[width=8cm]{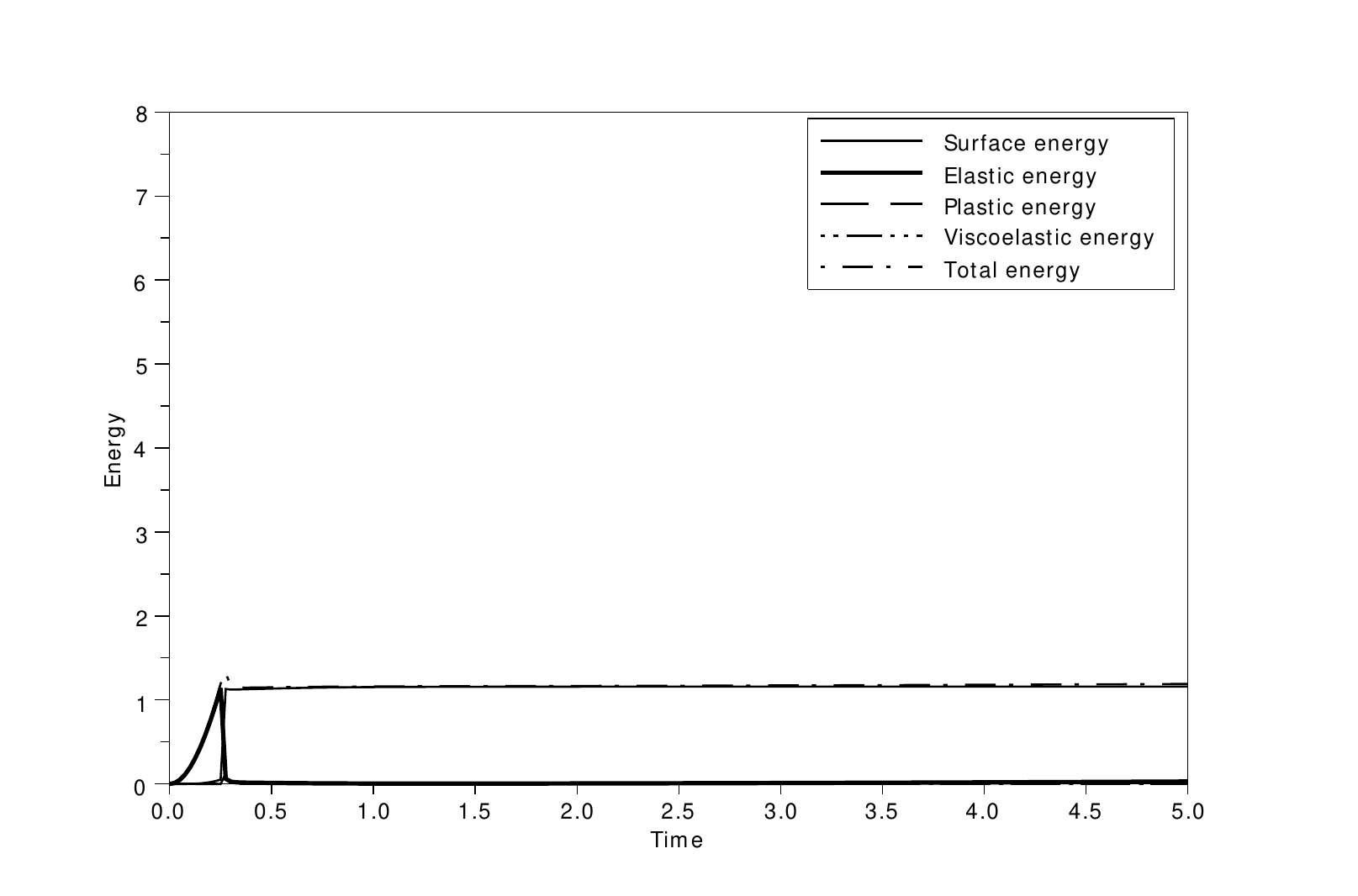}
\caption{Evolution of the total, elastic, plastic,  viscoelastic  and surface energies for the 1D traction experiment with backtracking, $\tau=1.5$, $\beta_1=0.0001$.}
\label{second}
   \end{minipage}  
 \begin{minipage}[c]{.46\linewidth}
\hspace*{-10mm}     
 \includegraphics[width=8cm]{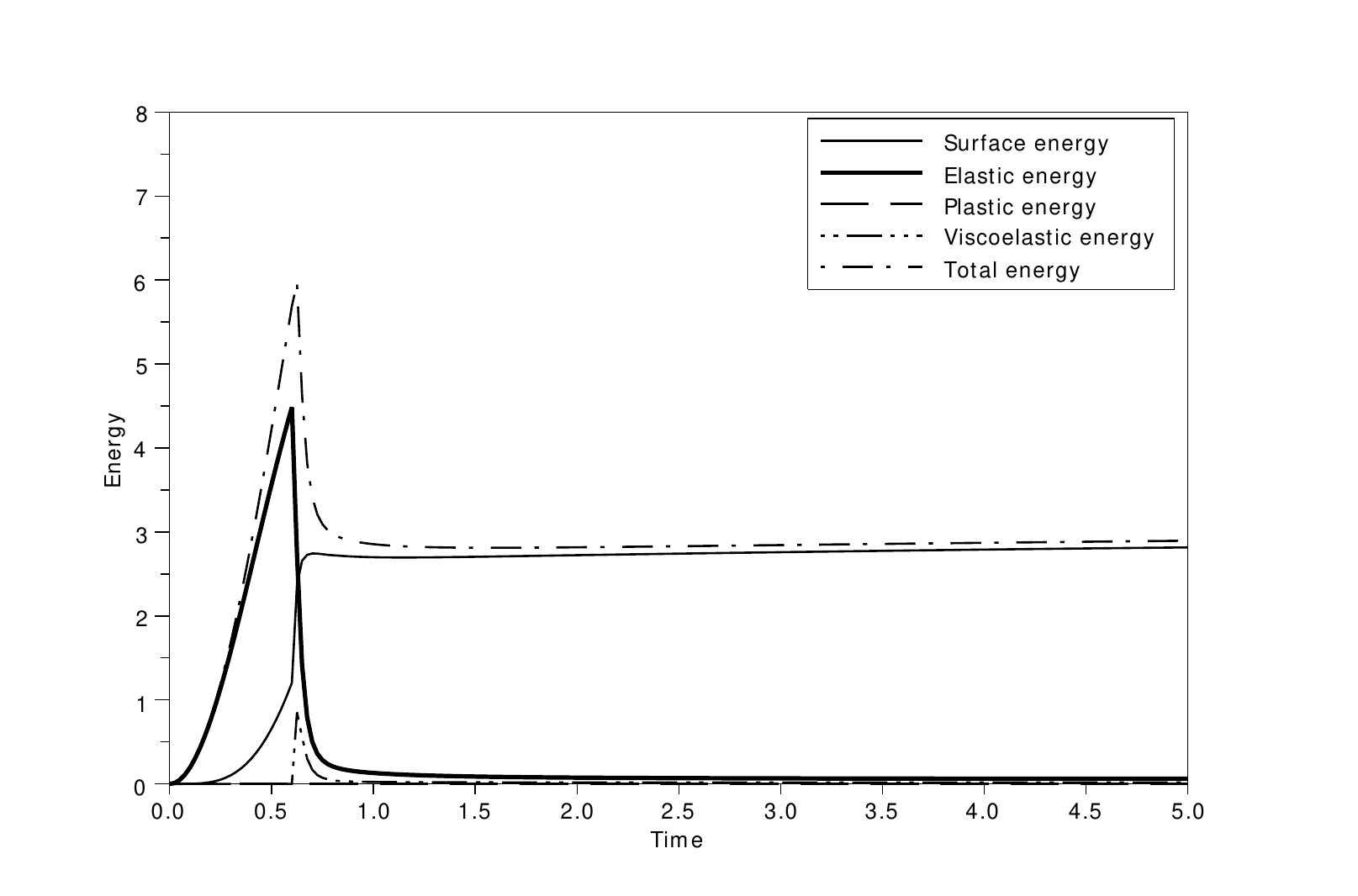}
\caption{Evolution of the total, elastic, plastic,  viscoelastic  and surface energies for the 1D traction experiment with backtracking, $\tau=5$, $\beta_1=0.01$.}
\label{third}
   \end{minipage} \hfill  
   \begin{minipage}[c]{.46\linewidth}
\hspace*{-10mm}     
 \includegraphics[width=8cm]{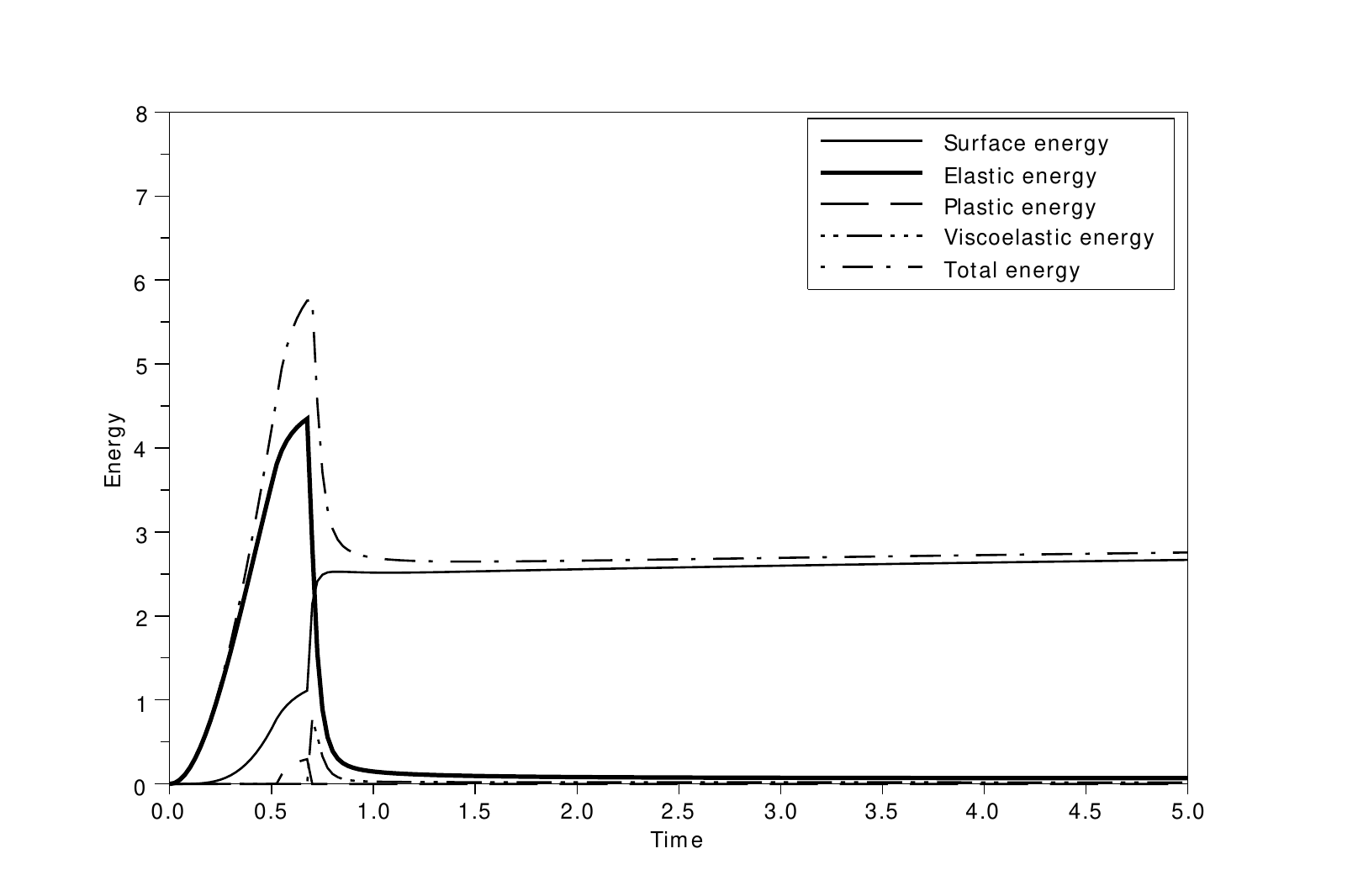}
\caption{Evolution of the total, elastic, plastic, viscoelastic and surface energies for the 1D traction experiment with backtracking, $\tau=1.5$, $\beta_1=0.01$.}
\label{four}
   \end{minipage} \hfill
\end{figure}
\subsubsection{Model 2 - The elasto-viscoplastic model with fracture}
We cannot exclude complex regimes, however, in all our numerical experiments, we only
observed that after the initial elastic regime, either plastic deformation takes place (Figure~\ref{Figure8}), or a crack may appear (Figure~\ref{Figure11}).
\begin{figure}[htbp!]
   \begin{minipage}[c]{.46\linewidth}
\hspace*{-10mm}     
 \includegraphics[width=8cm]{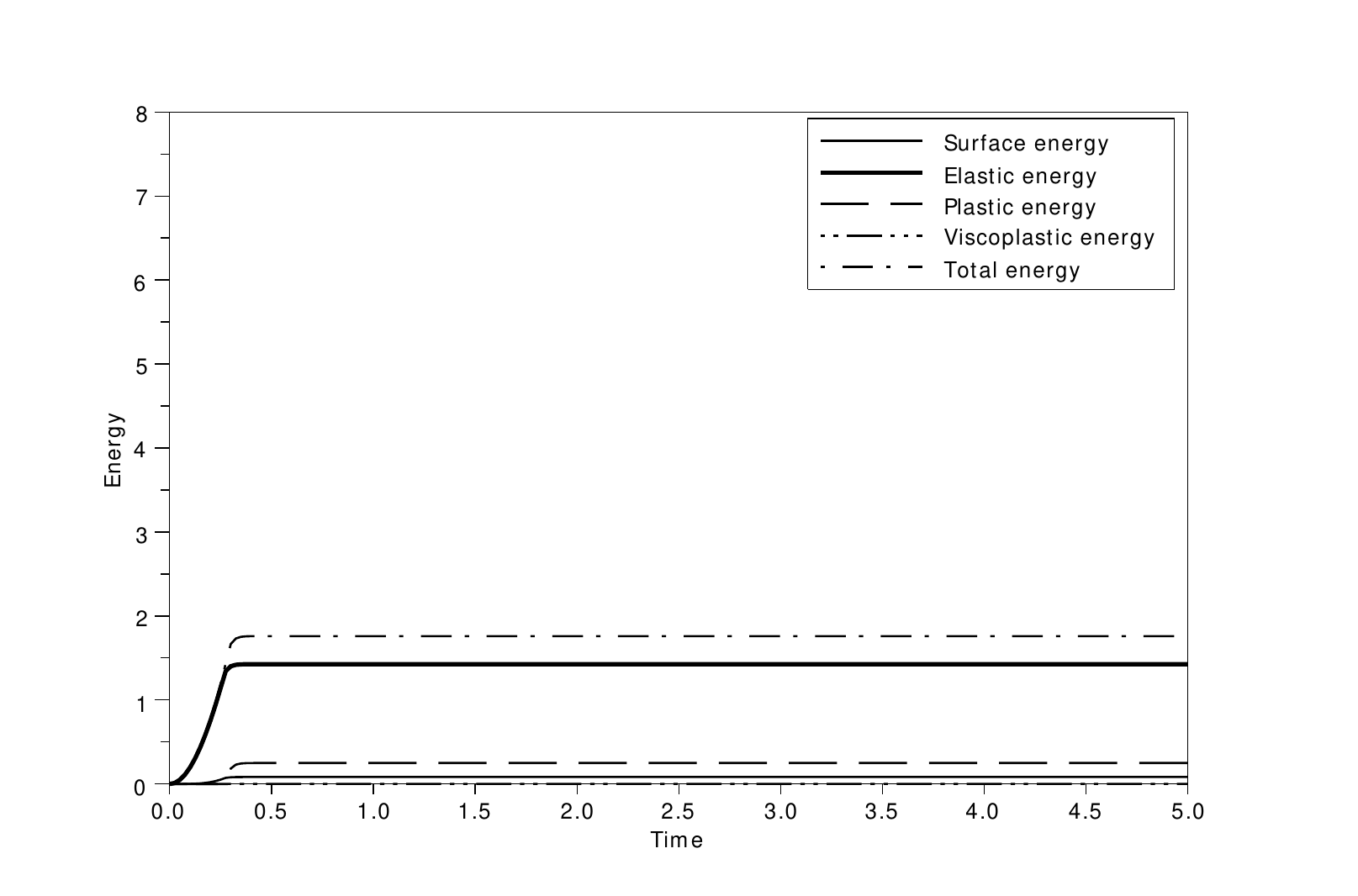}
\caption{Evolution of the total, elastic, plastic, viscoplastic  and surface energies for the 1D traction experiment with backtracking, $\tau=1$, $\beta_2=0.1$.}
\label{Figure8}
   \end{minipage} \hfill
   \begin{minipage}[c]{.46\linewidth}
\hspace*{-10mm}
      \includegraphics[width=8cm]{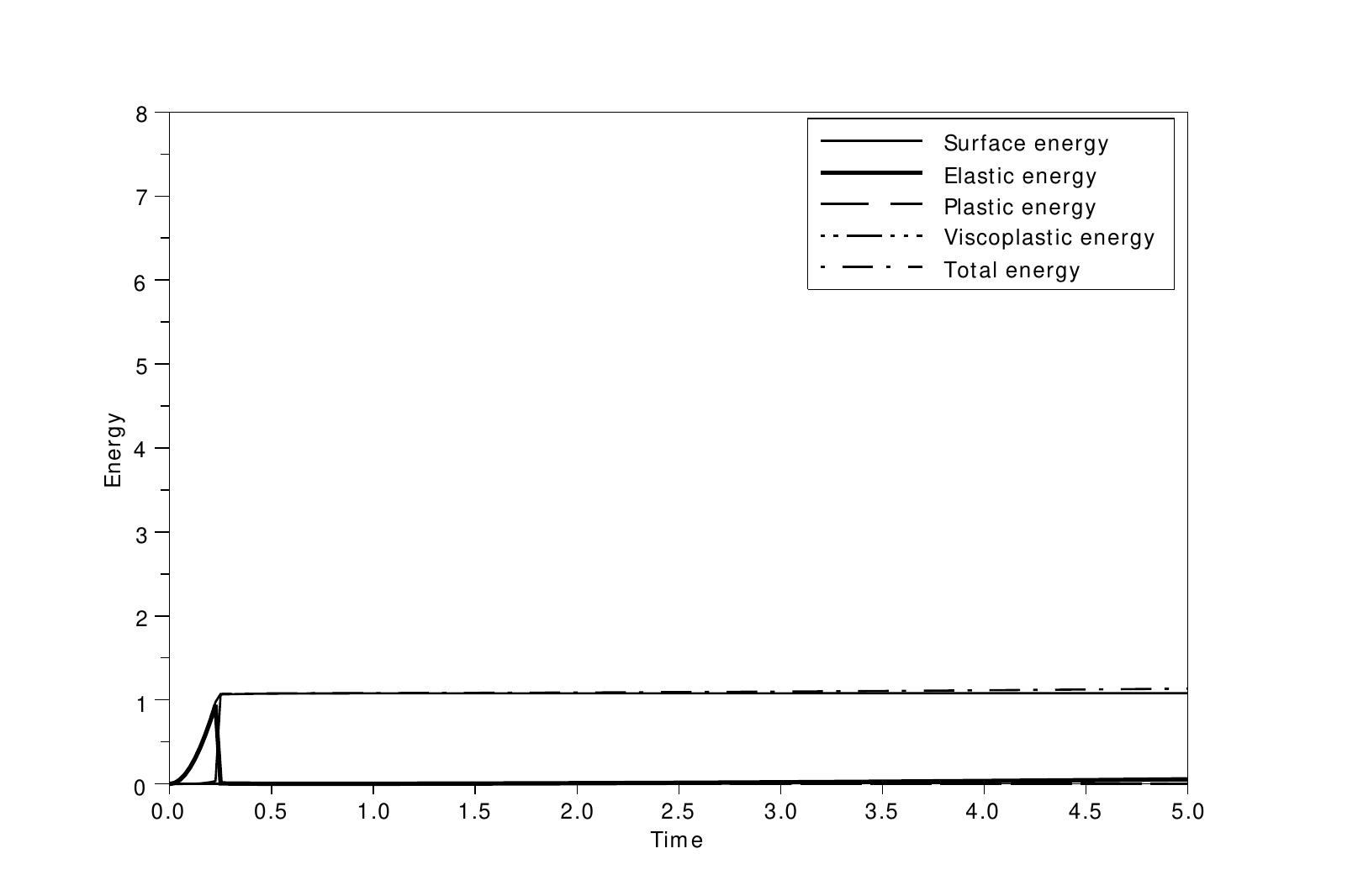}
\caption{Evolution of the total, elastic, plastic, viscoplastic  and surface energies for the 1D traction experiment with backtracking, $\tau=1$, $\beta_2=1$.}
\label{Figure11}
   \end{minipage}
\end{figure}
\subsubsection{Model 3- Elasto-plastic model with linear kinematic hardening and fracture.}\label{Hardening3}
We choose the hardening parameter $k=0.5$. 
For  $\tau=1$, the Figure~\ref{Figure4} shows the elastic behavior with fracture.
For $\tau=0.7$, the medium firstly plastifies and then cracks as depicted in 
Figure~\ref{Figure12}. 
Indeed, kinematic hardening allows the translation of the yield surface and thus the elastic energy can increase after the plastification, so that cracks can appear. 
\begin{figure}[htbp!]
   \begin{minipage}[c]{.46\linewidth}
\hspace*{-10mm}     
 \includegraphics[width=8cm]{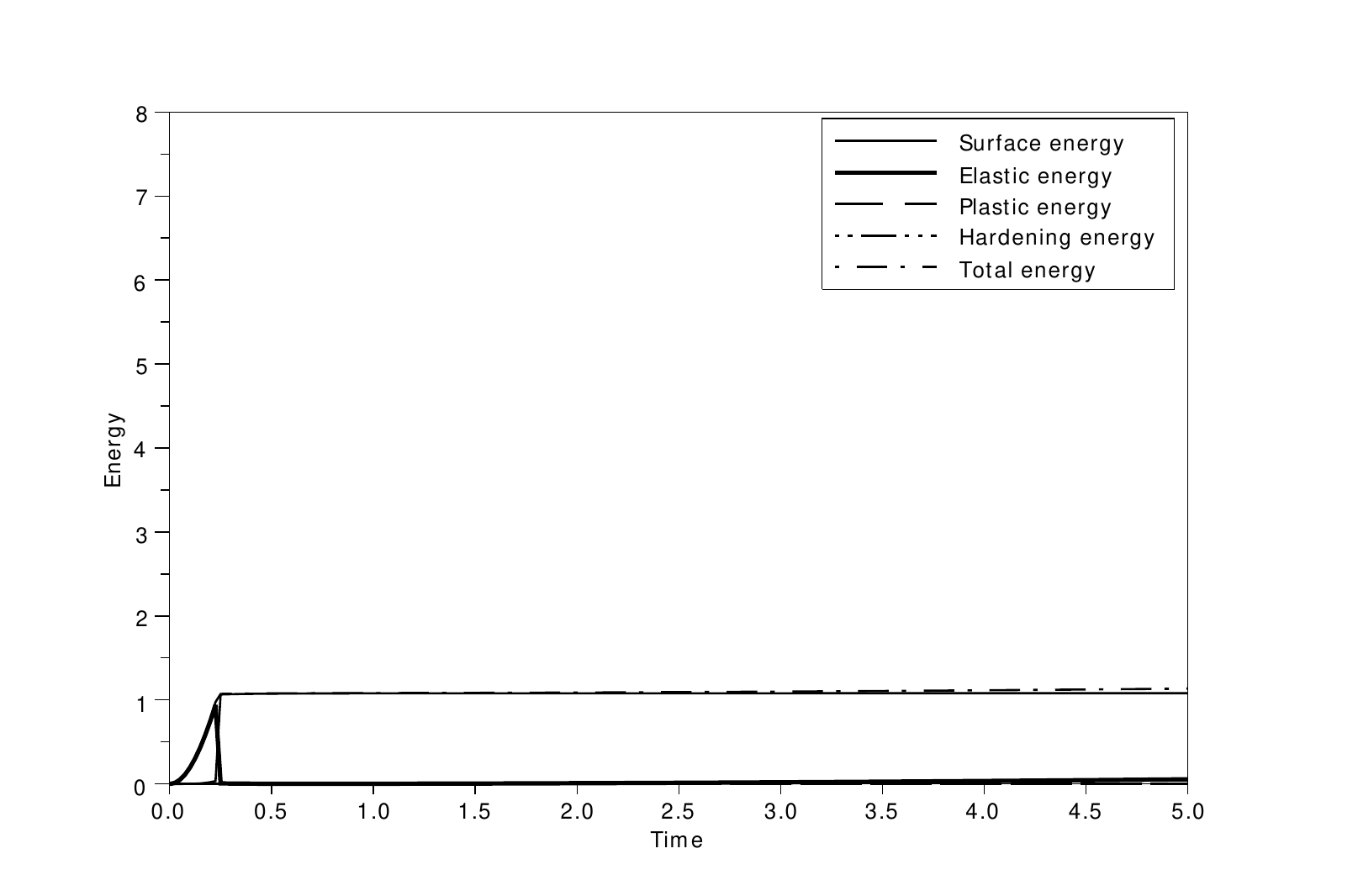}
\caption{Evolution of the total, elastic, plastic, hardening and surface energies for the 1D traction experiment with backtracking, $\tau=1$.}
\label{Figure4}
   \end{minipage}  \hfill
\begin{minipage}[c]{.46\linewidth}
\hspace*{-10mm}     
 \includegraphics[width=8cm]{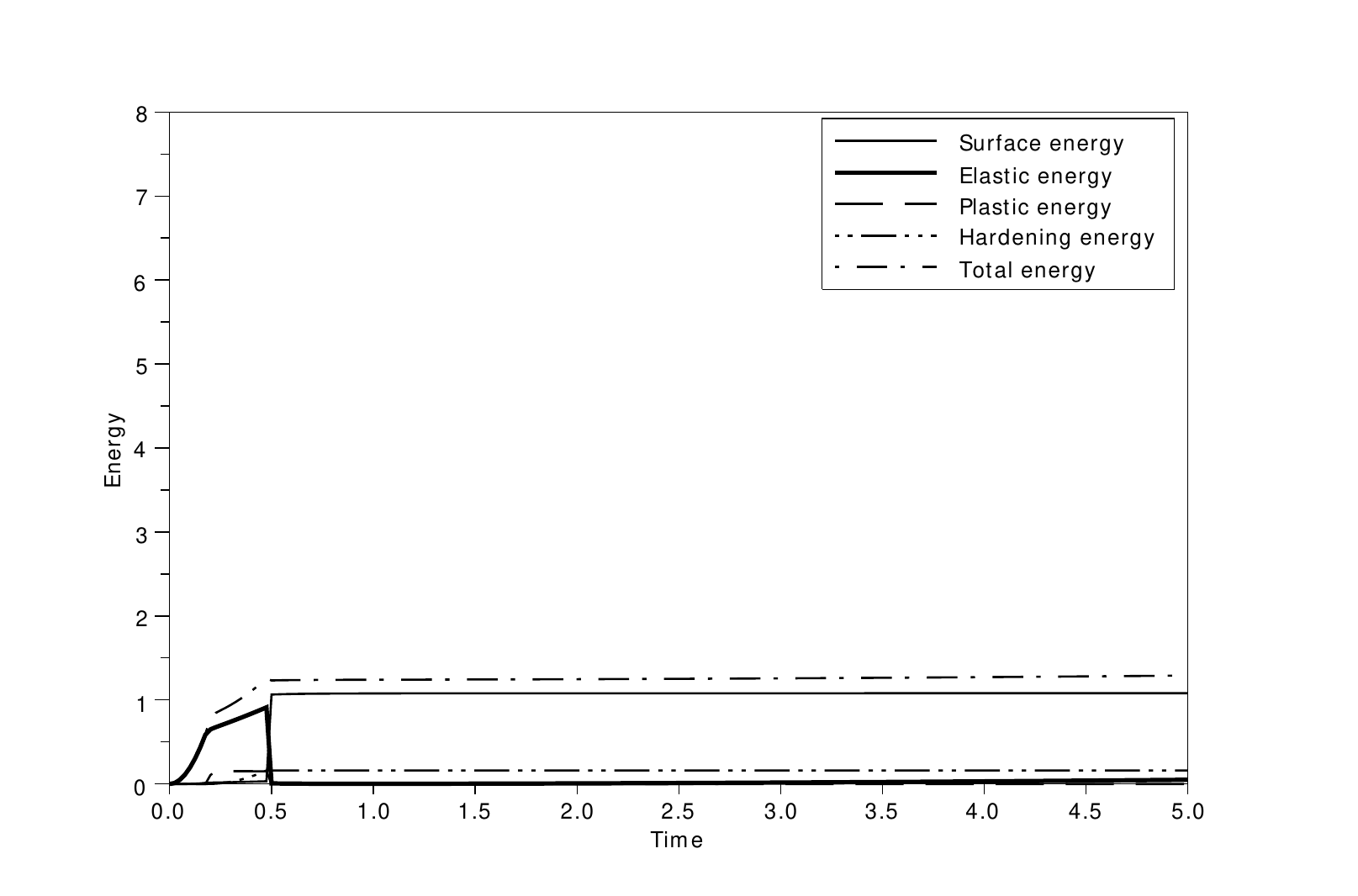}
\caption{Evolution of the total, elastic, plastic, hardening and surface energies for the 1D traction experiment with backtracking, $\tau=0.7$.}
\label{Figure12}
   \end{minipage}
\end{figure}
\subsection{2D-traction numerical experiments-Model 3}
From the numerical 1D-traction experiments of Models 1-3 we conclude that the Models 1 and 3 are those allowing the more complex
evolutions, as all their dissipative mechanisms can be expressed. Those two models have the capacity to plastify the body and then crack during evolution. 
This behavior strongly depends  on the choice of the mechanical parameters. 
Here, we illustrate the behaviour of Model 3 in 2D traction numerical experiments. 

We remark that contrarily to the 1D case, the minimization with respect to p is not explicit.
We compute $p$ with a standard gradient descent method. We consider a beam of length L, and cross section $S=1$ (so that $\Omega=(0,L)\times(0,1))$, which is clamped at $(x,y)=(0,y)$ for $y\in (0,S)$. 
The elastic parameters are the Young modulus $K$ and $\nu$ the Poisson coefficient. The elastic matrix A is defined via Lam\'e's coefficients associated with $K$ and $\mu$. 
For $y\in (0,S)$, we impose at time $t_h^n$  a constant displacement $t_h^nW_0=(t_h^nU_0,0)$ with $U_0>0$ at the right extremity $(x,y)=(L,y)$ of the beam.
We consider the hardening tensor $B$ as a diagonal matrix $B=k\mathbb{I}_2$ with $k>0$ a hardening parameter.
We report numerical experiments with the following parameters: $h=0.1$, $\bigtriangleup x=0.05$, $\varepsilon=0.25$, $K=10$, $k=100$, $\tau=1$, $\nu=0.252$, $U_0=1$.
\begin{figure}[htbp]
   \begin{minipage}[c]{.46\linewidth}
\hspace*{-8mm}     
 \includegraphics[width=6cm]{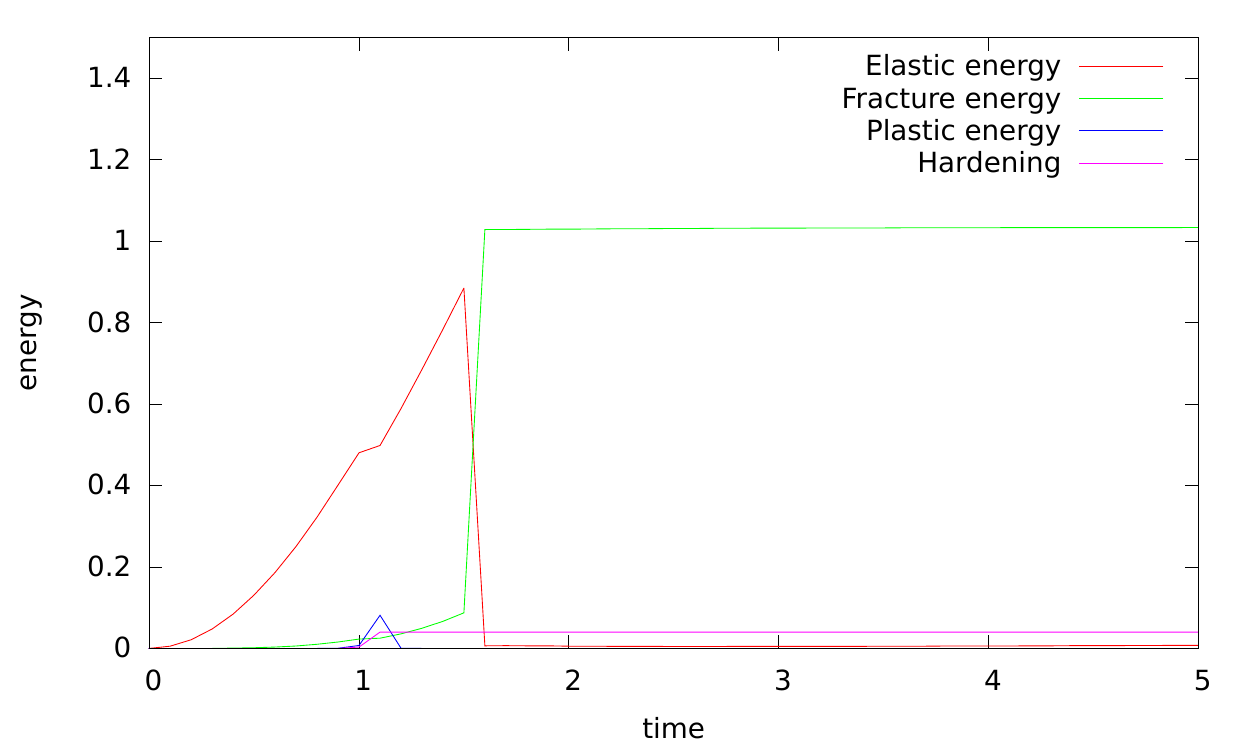}
\caption{Evolution of the elastic, plastic, hardening and surface energies for the 2D traction experiment with backtracking.}
\label{2D1}
 \end{minipage} 
 \hspace*{10mm}
\begin{minipage}[c]{.46\linewidth}
 \includegraphics[width=5cm]{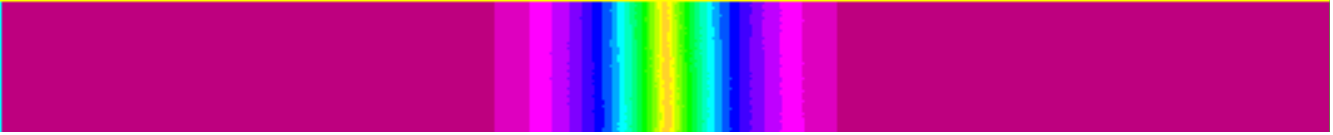}
\caption{Profile of the v(t,.)-fracture approximation at time t=5.}
\label{2D2}
 \vspace*{1cm}
\includegraphics[width=5cm]{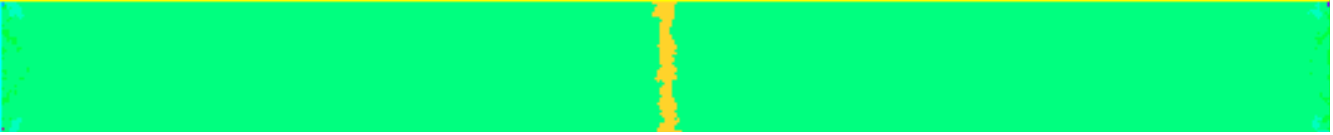}
\caption{Profile of the matrix norm of plastic strain $|p(t,.)|$ at time t=5, ($|p(t,.)|=0.008$ (yellow), $|p(t,.)|=0.009$ (green)).}
\label{2D3}
\end{minipage}
\end{figure}
The evolution of Model 3 shows that with this choice of parameters the material is deformed plastically and then cracks. 
We reproduce qualitatively the same behavior as that the of 1D traction experiment of \ref{Hardening3} (see Figure~\ref{2D1}). 
In Figure~\ref{2D2}, the yellow zone represents the cracked zone.
The magenta zone represents the crack-free zone where $v\sim1$ (see also Figure~\ref{2D3} for plastic strain p). 
\subsection{Numerical simulation of the Peltzer and Tapponnier plasticine experiment}
Using Model 3, we reproduce numerically  the first stages of the plasticine experiment, see Figure~\ref{rupture1}. 
This experiment is meant to model the action of India (as an indenter) on the Tibetan Plateau.

We consider a square domain $\Omega=(0,1)\times (0,1)$, that  represents the layer of plasticine, see Figure~\ref{figomega}.

At time $t_h^n$, the indenter is modeled by a Dirichlet boundary condition $u=t_h ^nU_0=(0,t_h^nU_0)$ with $U_0>0$ on $\partial\Omega_3$.
We set $u=0$ on $\partial\Omega_6$ and $u.\vec{n}=0$ on $\partial\Omega_1$.
We use following parameters:
$h=0.05$, $\bigtriangleup x=0.017$, $\varepsilon=0.15$, $K=100$, $k=100$, $\tau=1$, $\nu=0.252$, $U_0=1$.
In Figure~\ref{2D4} we show the fracture profile at $t=2$, which is in good agreement with the plasticine experiment. In Figure~\ref{2D5},
we observe that Model 3 deforms plastically the layer of plasticine and then cracks. Figure~\ref{2D7} indicates the regions of plastic deformation at time $t=2$.
\begin{figure}[htbp!]
   \begin{minipage}[c]{.44\linewidth}
 \includegraphics[width=5cm]{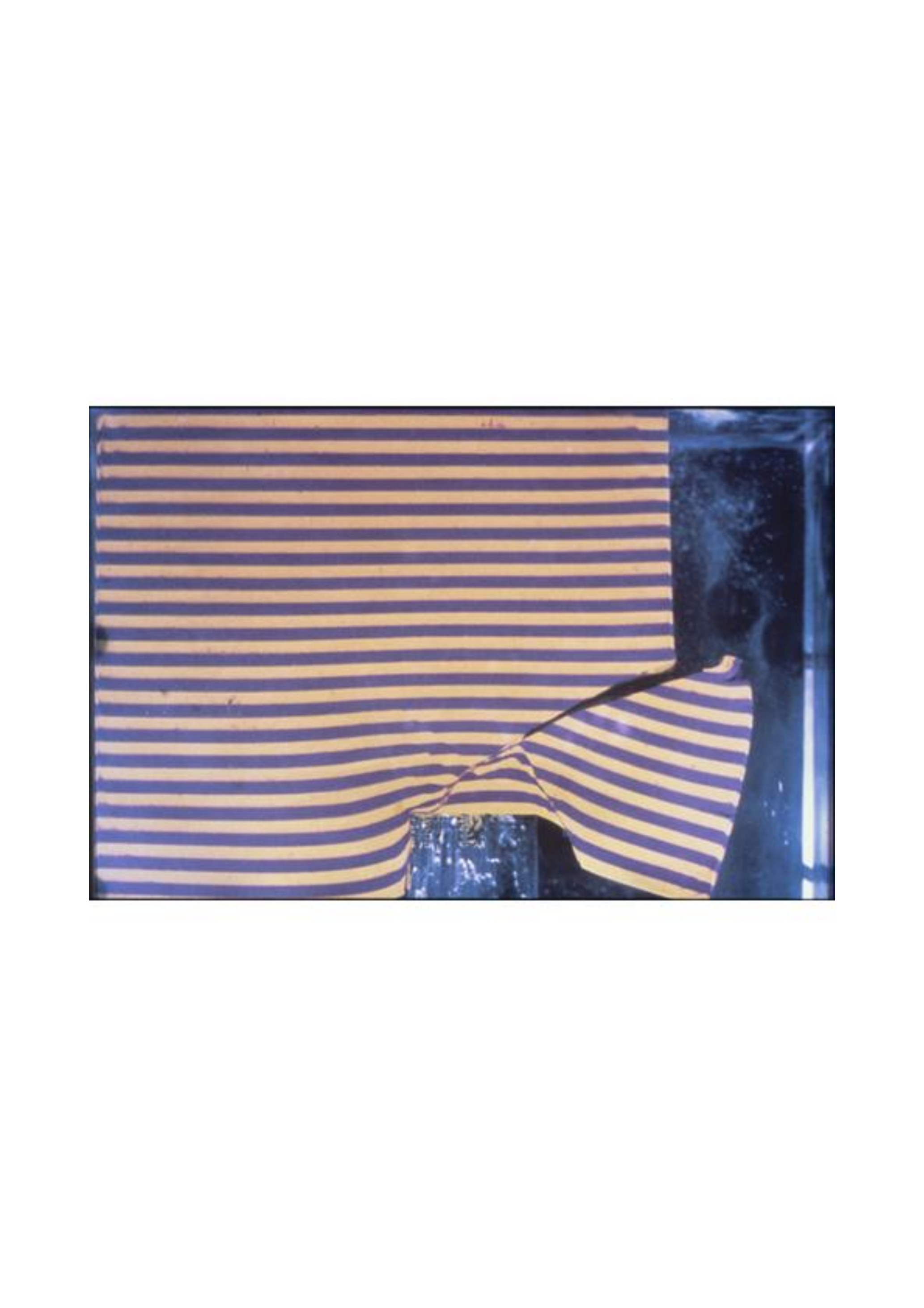}
\caption{Tapponnier and Peltzer's indentation experiment.}
\label{rupture1}
\vspace{0.5cm}
 \includegraphics[width=5cm]{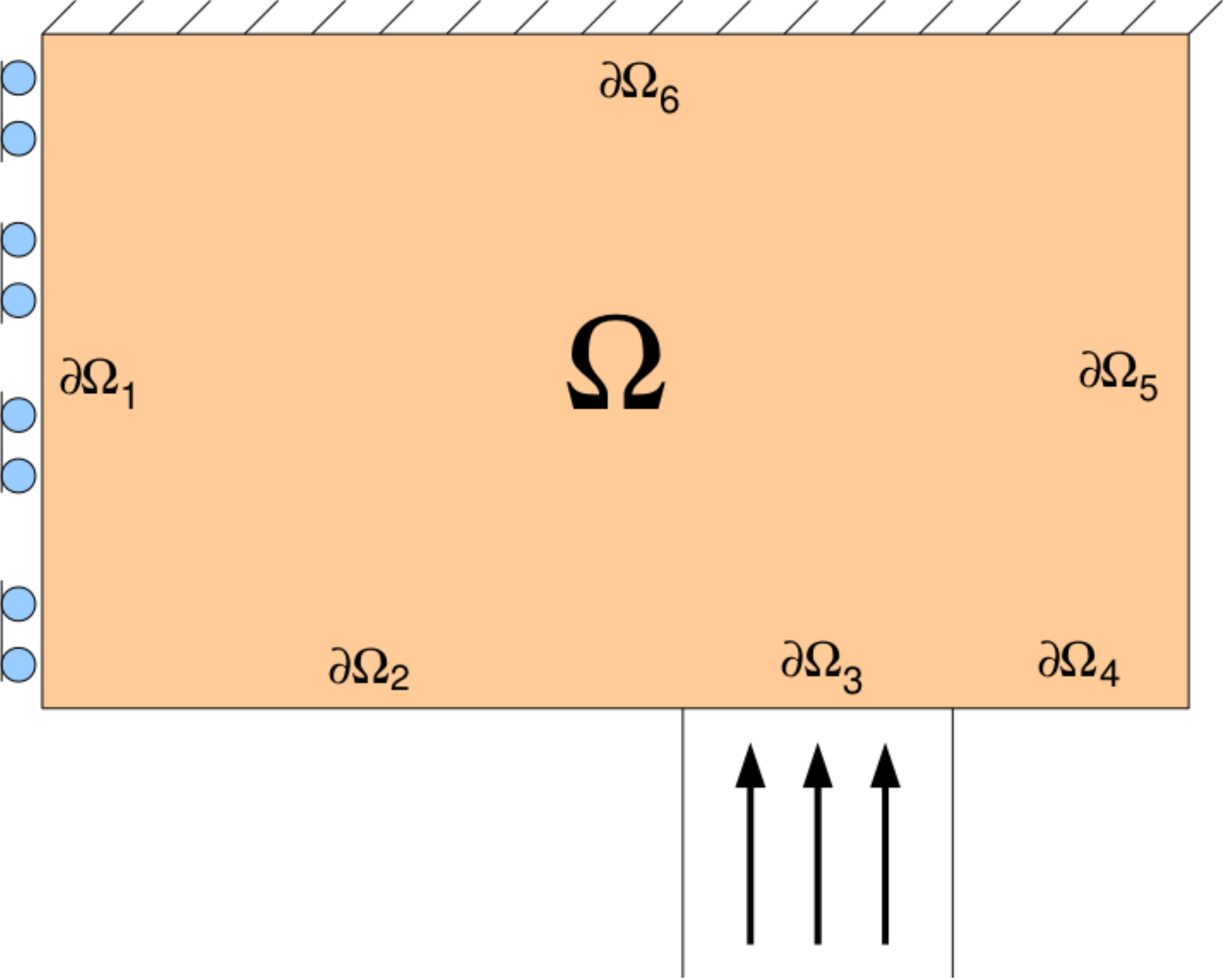}
\caption{Domain $\Omega$ with boundary partition.}  
\label{figomega}
  \end{minipage} \hfill
   \begin{minipage}[c]{.44\linewidth}
      \includegraphics[width=5cm]{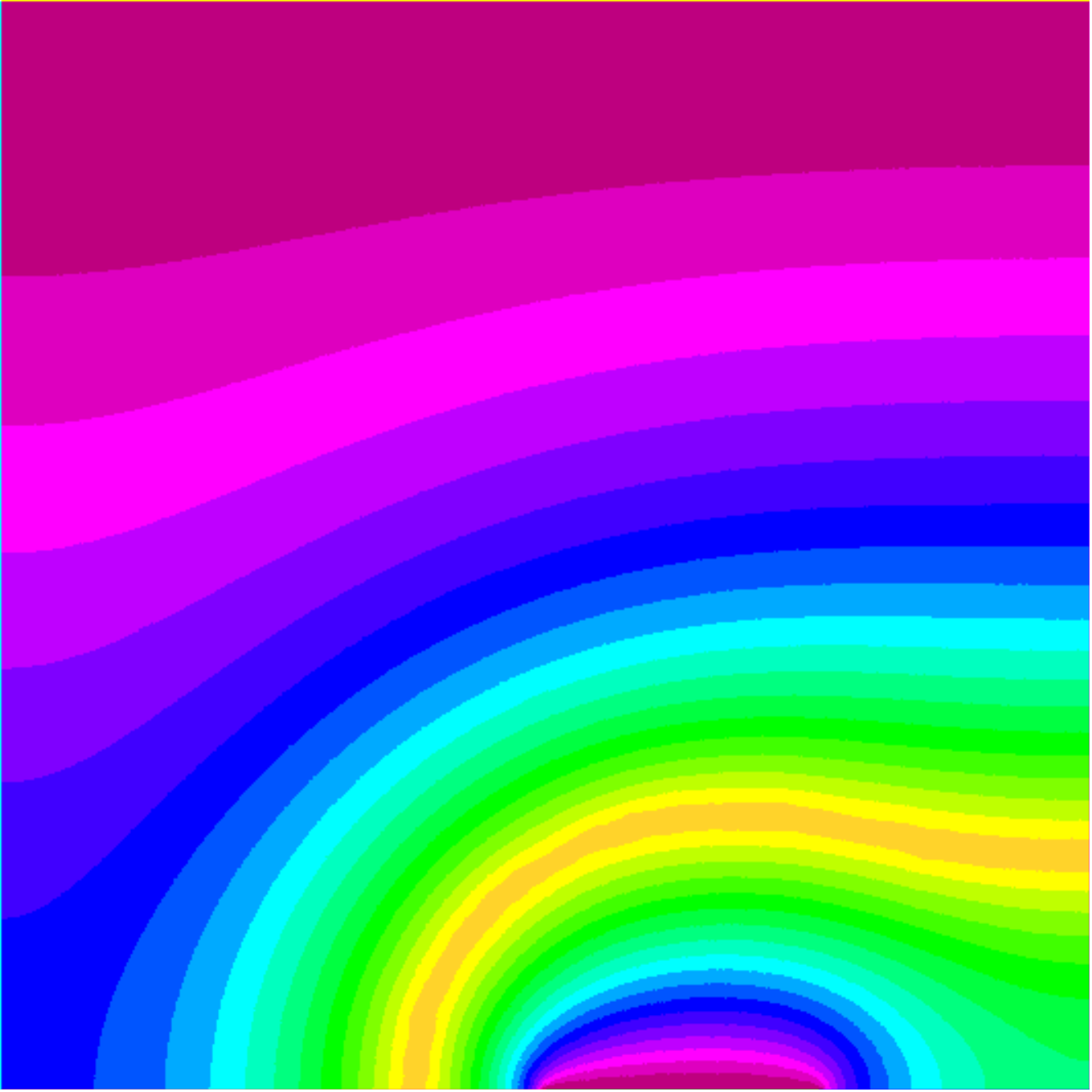}
\caption{Profile of the v(t,.)-fracture approximation at time t=2.}
\label{2D4}
\end{minipage}
\end{figure}
\begin{figure}[htbp!]
\begin{minipage}[c]{.44\linewidth}
\hspace*{-10mm}     
 \includegraphics[width=7cm]{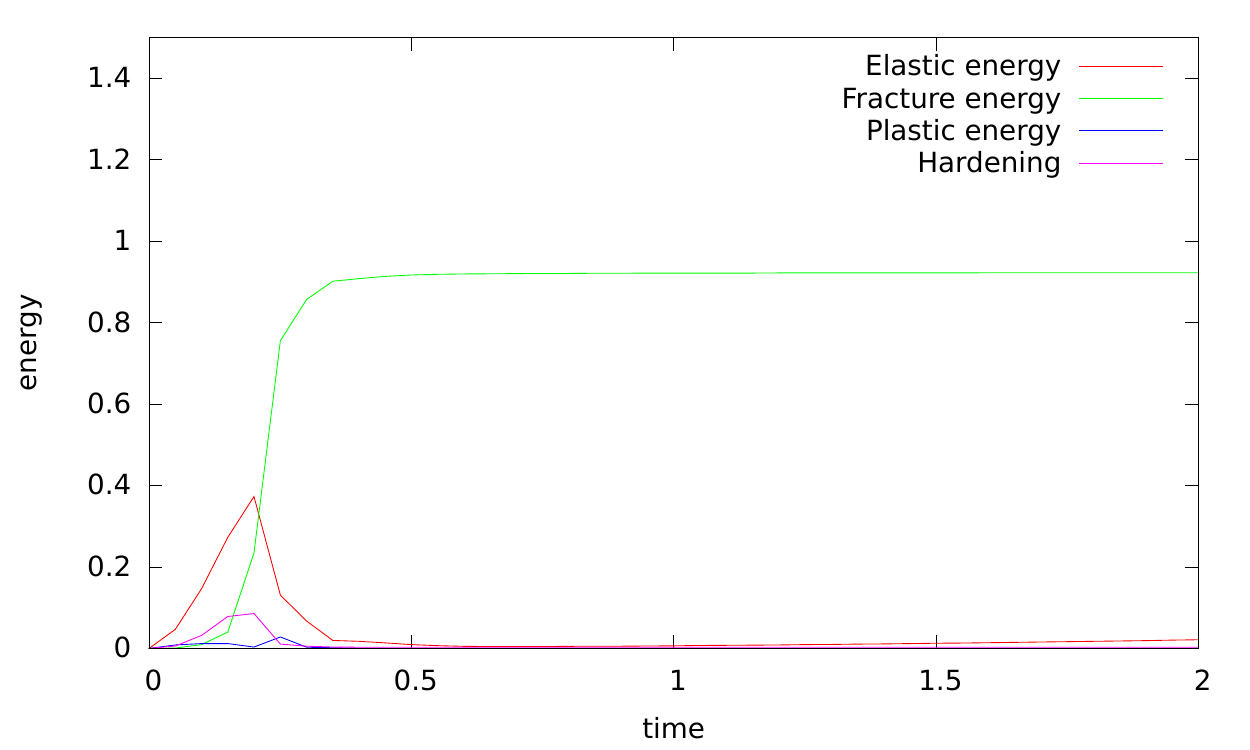}
\caption{Evolution of the elastic, plastic, hardening and surface energies for the 2D plasticine experiment with backtracking.}
\label{2D5}
   \end{minipage} \hfill
   \begin{minipage}[c]{.46\linewidth}
      \includegraphics[width=5cm]{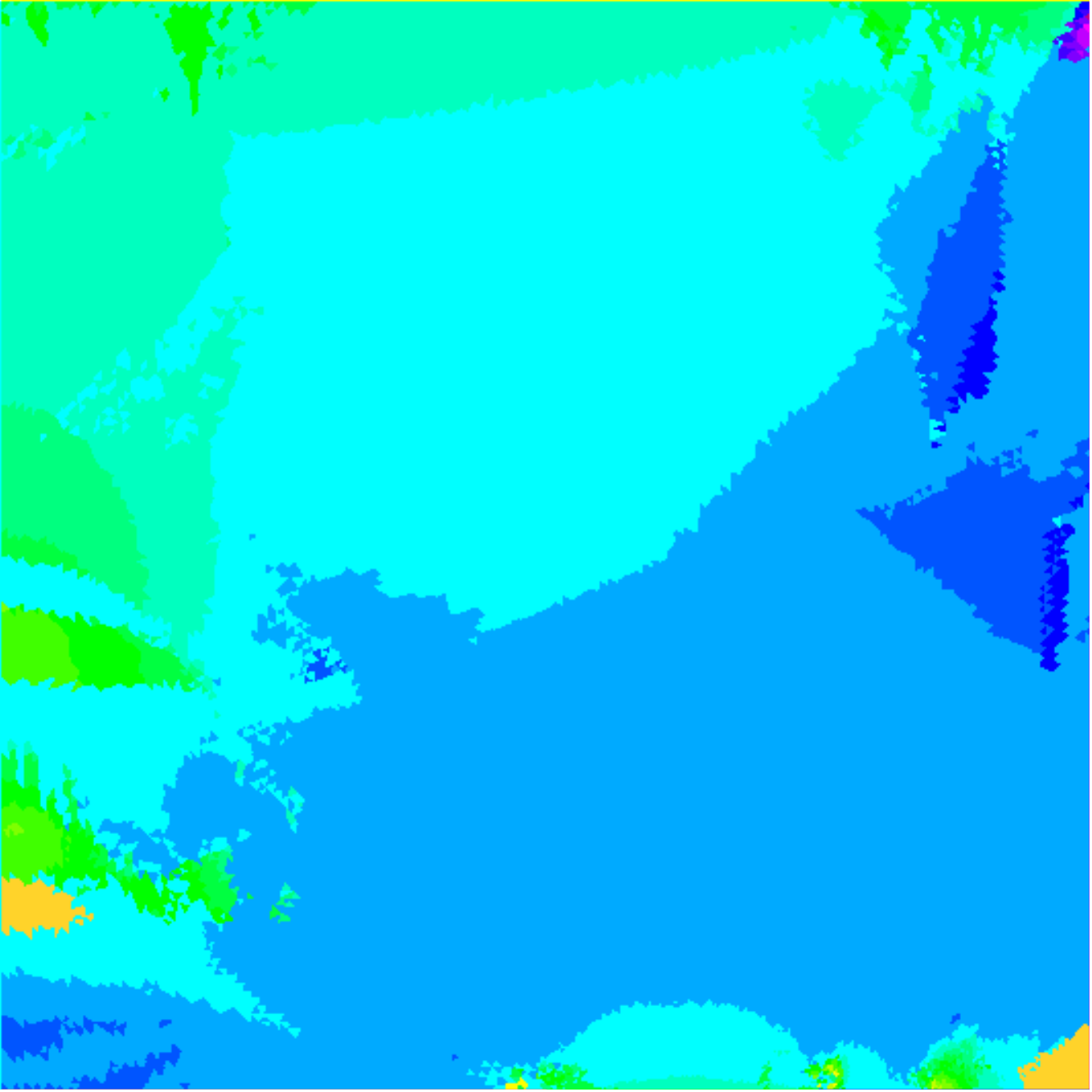}
\caption{Profile of the matrix norm of plastic strain $|p(t,.)|$ at time t=2. ($|p(t,.)|=0.01$ (blue), $|p(t,.)|=0.003$ (green)).}
\label{2D7}
   \end{minipage}
\end{figure}

\section{Conclusion}
In this work, we study 3 models of evolution for materials that can exhibit several dissipation mechanisms: fracture, plasticity, viscous dissipation.
The evolution is defined via a time discretization: at each time step, we seek to minimize a global energy with respect to the variables $(u,p,v)$.
We have reported numerical experiments that show that Models 1 and 3 are most versatile: in particular we can observe evolutions where such materials become
plastic and then crack

In a forthcoming work, we show that we can pass to the limit in Model 1 as time step tends to 0 and give an existence result for a continuous evolution (E1)-(E6).
\section*{Acknowledgements}
The authors wish to express their gratitude to G. Francfort for the  fruitful and enlightening discussions.
This work has been supported by a grant from Labex OSUG@2020 (Investissements d'\,avenir – ANR10 LABX56).

\end{document}